\documentclass{amsart}
\usepackage[all,cmtip]{xy}
\usepackage{pb-diagram,pb-xy}
\usepackage{amsmath,amscd,amsthm,amsfonts, amssymb,amsxtra}
\CDat

\SelectTips{cm}{}
\subjclass{Primary: 57P10, 57Q45 Secondary:  55Q25, 55P91}
\newtheorem{thm}{Theorem}[section]  
\newtheorem*{un-no-thm}{Theorem}
\newtheorem{lem}[thm]{Lemma}         
\newtheorem{prop}[thm]{Proposition}  

\newtheorem{bigthm}{Theorem}

\newtheorem{bigcor}[bigthm]{Corollary}

\newtheorem{bigprop}[bigthm]{Proposition}

\newtheorem*{mainthm}{Theorem}

\theoremstyle{definition}
\newtheorem{defn}[thm]{Definition}   

\theoremstyle{definition}

\theoremstyle{definition}

\theoremstyle{remark}
\newtheorem{rem}[thm]{Remark}        

\newtheorem*{acks}{Acknowledgements}

\newtheorem{ex}[thm]{Example}



\begin{document}
\title{Poincar\'e duality and Periodicity}
\date{\today}
\author{John R. Klein}
\address{Dept.\ of Mathematics, Wayne State University, Detroit, MI 48202}
\email{klein@math.wayne.edu}
\author{William Richter}
\address{Dept.\ of Mathematics, Northwestern University, Evanston, IL 60208}
\email{richter@math.northwestern.edu}
\thanks{The first author is partially supported by the NSF}
\begin{abstract}
We construct periodic families of Poincar\'e complexes, partially
solving a question of Hodgson, and infinite families of Poincar\'e
complexes whose top cell falls off after one suspension but which fail
to embed in a sphere of codimension one.  We give a homotopy theoretic
description of the four-fold periodicity in knot cobordism.
\end{abstract}
\maketitle
\setlength{\parindent}{15pt}
\setlength{\parskip}{1pt plus 0pt minus 1pt}
\def\bdot{\bold .}
\def\Top{\bold T\bold o \bold p}
\def\Sp{\bold S\bold p}
\def\vo{\varOmega}
\def\smsh{\wedge}
\def\^{\wedge}
\def\flush{\flushpar}
\def\id{\text{\rm id}}
\def\dbslash{/\!\! /}
\def\codim{\text{\rm codim\,}}
\def\:{\colon}
\def\holim{\text{holim\,}}
\def\hocolim{\text{hocolim\,}}
\def\cal{\mathcal}
\def\Bbb{\mathbb}
\def\bold{\mathbf}
\def\simtwohead{\,\, \hbox{\raise1pt\hbox{$^\sim$} \kern-13pt $\twoheadrightarrow \, $}}
\def\codim{\text{\rm codim\,}}
\def\stableto{\mapstochar \!\!\to}

 \let\Sec=\S
\def\S{\Sigma}

\section{Introduction} 
Let $X^n$ be a finite oriented Poincar\'e complex of dimension 
$\ge 3$.  We may suppose~\cite[Thm.~2.4]{Wall3} that 
$X = K \cup_{\alpha} D^n$ where $K$ is a CW complex of dimension
$\le n {-}1$, and $\alpha\:S^{n-1} \to K$ is the attaching map for the
top cell of $X$.  Since $K$ is unique up to homotopy, we call $K$ the
{\it spine} of $X$.  Hodgson~\cite{Hodgson3} posed the question:

{\flushleft \bf Question 1 (Hodgson).}  {\it Given an $n$-dimensional
Poincar\'e complex $X^n$ with spine $K$, is there an $(n+2)$-dimensional
Poincar\'e complex $Y^{n+2}$ with spine $\S K$?}

Note that by Poincar\'e duality, the obvious dimension of $Y$ is
$n+2$.  The answer is often no, e.g., the cofibers of the Hopf
invariant one maps $\Bbb C P^2 = S^2 \cup_\eta D^4$,
$\Bbb H P^2 = S^4 \cup_\nu D^8$ and $S^8 \cup_\sigma D^{16}$, whose
spines are $S^2$, $S^4$ and $S^8$ respectively.  These examples are
generalized by the class pointed out to us by Jim Davis:



\begin{ex} 
\label{jims-ex}
Let $X$ be a connected $4k$-dimensional Poincar\'e complex with odd
Euler characteristic. Let $K$ be the spine of $X$.  Then there is no
Poincar\'e complex $Y$ of dimension $4k+2$ having spine $\S K$,
because the Euler characteristic of $Y$ would have to be odd (since
$\chi(X) \equiv \chi(Y) \text{ mod } 2$). But the Euler characteristic
of $Y$ must be even, since its intersection form is skew
symmetric.  Thus $Y$ can't exist.

The Poincar\'e complexes $\Bbb CP^{2k}$ and $\Bbb H P^{2k}$ are in
this class.  The class is closed with respect to taking products.
Furthermore, if $X^{4k}$ is in the above class and $Y^{4k}$ has even
Euler characteristic, then the connected sum $X \# Y$ is in the class.
\end{ex}

Question 1 sometimes has a positive answer: for example, the torus
$S^p {\times} S^q$ has spine
$S^p \vee S^q$. 
We formulate a slightly weaker version of Hodgson's question.

{\flushleft \bf Question 2.}  {\it Given a Poincar\'e complex $X$ with
spine $K$, does there exist an integer $j > 0$ and a Poincar\'e complex
$Y$ whose spine is $\Sigma^j K$?}

Adams's Hopf invariant one theorem ~\cite{Hopf-invariant-one} and 
$X = S^8 \cup_{\sigma} D^{16}$
shows that Question 2 can have a negative answer.  Question 2 has an
affirmative answer for those $X$ whose top cell splits off after a
single suspension, and for $j$ even:

\begin{bigthm} \label{period-thm} 
Let $X^n$ be a Poincar\'e complex with spine $K$ such that the top cell
of $X$ splits off after one suspension.  Then there exists a
Poincar\'e complex $Y^{n+4}$ whose spine is $\Sigma^2 K$ and whose top
cell splits off after one suspension.
\end{bigthm}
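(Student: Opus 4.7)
The plan is to construct $Y$ directly as a mapping cone $Y = \Sigma^2 K \cup_\beta D^{n+4}$ for a carefully chosen attaching map $\beta \colon S^{n+3} \to \Sigma^2 K$, and to verify Poincar\'e duality together with the desired splitting property. The hypothesis that the top cell of $X$ splits off after one suspension is equivalent to $\Sigma \alpha = 0$ in $\pi_n(\Sigma K)$; by the James--Hopf framework (equivalently, via compression through the second stage $J_2(K)$ of the James construction on $K$, or, when $K$ is a wedge of spheres, by Hilton's theorem), this provides $\alpha$ with a canonical secondary lift, which is the extra homotopical datum needed to build $\beta$. I would then construct $\beta$ as an ``$\eta^2$-twisted'' form of this lift of $\alpha$, using the Spivak normal fibration $\nu_X \colon X \to BSG$ restricted to $K$ to enforce compatibility with the Poincar\'e structure. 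The heuristic is that $\beta$ combines the double suspension $K \mapsto \Sigma^2 K$ with the $\eta^2 \in \pi_2^s$ multiplication, yielding the dimensional shift $n \mapsto n+4$. A guiding sanity check is the case $X = S^p \times S^q$ with spine $K = S^p \vee S^q$ and $\alpha = [\iota_p, \iota_q]$: the desired $Y$ is $S^{p+2} \times S^{q+2}$ with $\beta = [\iota_{p+2}, \iota_{q+2}]$, a ``Whitehead product shift'' of $\alpha$; any general construction must specialize to this.

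Verifying Poincar\'e duality for $Y$ is the technical heart of the argument. The homology of $Y$ is forced by the cell structure: $H_i(Y) = H_{i-2}(X)$ for $2 \le i \le n+3$, $H_{n+4}(Y) = \mathbb{Z}$, with vanishing in the gap dimensions $n+2$ and $n+3$. These ranks are automatically consistent with $(n+4)$-dimensional Poincar\'e duality whenever $X$ satisfies $n$-dimensional PD, so the dimensional constraints are immediate. The substantive step is to exhibit a fundamental class $[Y] \in H_{n+4}(Y)$ and verify that cap product with $[Y]$ gives isomorphisms $H^*(Y) \to H_{n+4-*}(Y)$; this requires the choice of $\beta$ to encode the PD content of $X$ so that the cap product on $Y$ matches, after the $+4$ dimension shift, the cap product on $X$. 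The splitting $\Sigma \beta = 0$, which yields the promised splitting of the top cell of $Y$, should then follow by tracking one suspension through the construction: the $\alpha$-factor of $\beta$ is annihilated by a single suspension, while the $\eta^2$-factor is stably nontrivial but contributes to a part of $\Sigma Y$ that splits off.

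The main obstacle is the choice of $\beta$ and the ensuing Poincar\'e duality verification. The interaction between $\alpha$, its secondary James--Hopf refinement, the Spivak normal fibration $\nu_X$, and the Hopf map $\eta^2$ must be controlled precisely: any mismatch destroys the cap product isomorphism. Once the correct $\beta$ is pinned down, the remaining calculations (the homological checks, the construction of the Spivak normal fibration on $Y$ extending the relevant data on $\Sigma^2 K$, and the verification $\Sigma \beta = 0$) should be comparatively routine.
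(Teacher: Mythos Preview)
Your proposal never actually defines $\beta$, and the two guiding heuristics you offer are both off-track. The Spivak normal fibration plays no role, and there is no $\eta^2$ involved: your own sanity check, $X = S^p \times S^q$ with $\beta = [\iota_{p+2}, \iota_{q+2}]$, already shows this---that $\beta$ is a pure Whitehead product with no $\eta^2$ twist. The dimensional shift by $4$ (rather than $2$) comes from a sign phenomenon you have not identified.

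Here is what the paper does. The splitting of the top cell of $\Sigma X$ is encoded by a retraction $f\colon \Sigma X \to \Sigma K$ (with $f \circ \Sigma i \simeq \mathrm{id}$ and $f \circ \rho \simeq *$, where $\rho\colon S^{n+1} \to \Sigma X$ is the degree-one map). Proposition~\ref{duality-criterion} shows that the composite
\[
S^{n+2} \xrightarrow{\,\Sigma\rho\,} \Sigma^2 X \xrightarrow{\,\lambda(f)\,} \Sigma K \wedge \Sigma K \xrightarrow{\,1 - \tau_{\Sigma K}\,} \Sigma K \wedge \Sigma K
\]
is an $S$-duality map, where $\lambda$ is the Boardman--Steer Hopf invariant. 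One then \emph{defines}
\[
\beta\colon S^{n+3} \xrightarrow{\,\Sigma^2\rho\,} \Sigma^3 X \xrightarrow{\,\Sigma\lambda(f)\,} \Sigma(\Sigma K \wedge \Sigma K) \xrightarrow{\,[\iota,\iota]\,} \Sigma^2 K .
\]
Since $\lambda([\iota,\iota]) = 1 + \tau$ (Lemma~\ref{Hopf-invariant-Whitehead-product}), naturality gives $\lambda(\beta) = (1+\tau_{\Sigma^2 K}) \circ \Sigma^2\lambda(f) \circ \Sigma^3\rho$. The crucial point is the sign trick of~\eqref{twist-and-suspend}: under the shuffle $\Sigma^2(B\wedge B) \simeq \Sigma B \wedge \Sigma B$, the map $\Sigma^2\tau_B$ corresponds to $-\tau_{\Sigma B}$, so double suspension converts $(1-\tau_{\Sigma K})$ into $(1+\tau_{\Sigma^2 K})$. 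Hence $\lambda(\beta)$ is an $S$-duality map, and $Y = \Sigma^2 K \cup_\beta D^{n+4}$ is a Poincar\'e complex by Proposition~\ref{B-S-criterion}. Finally $\Sigma\beta = 0$ since $\beta$ factors through a Whitehead product.

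This sign flip is precisely why the period is $4$ and not $2$: the obvious attempt $\gamma = [\iota,\iota]\circ H(f)\circ \rho\colon S^{n+1}\to \Sigma K$ yields $\lambda(\gamma)$ with $(1+\tau_{\Sigma K})$ rather than the required $(1-\tau_{\Sigma K})$, and duality fails in general. Your proposal, lacking both the explicit $\beta$ and this mechanism, does not constitute a proof.
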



If a Poincar\'e complex embeds in codimension one, then its top cells
split off after one suspension: if
$X^n \subset S^{n+1}$ is a codimension one Poincar\'e embedding,
then the Pontryagin-Thom construction gives a degree one map
$S^{n+1} \to \S X$ which splits off the top cell of $X$.  We
answer Question 1 for this class of Poincar\'e complexes:

\begin{bigthm} \label{cod-one} 
Suppose $X$ as above has spine $K$. Then $\S K$ is the spine of a
Poincar\'e complex $Y$, and $Y$ has a codimension one
Poincar\'e embedding in $S^{n+3}$.
\end{bigthm}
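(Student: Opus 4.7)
The plan is to construct $Y$ as the boundary of an appropriate $(n{+}3)$-dimensional Poincar\'e pair built from the codimension one Poincar\'e embedding data, and to realize the codimension one embedding $Y \hookrightarrow S^{n+3}$ through the standard equatorial inclusion $S^{n+1} \hookrightarrow S^{n+3}$.

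The hypothesis gives a decomposition $S^{n+1} = W_1 \cup_X W_2$ with each $(W_i, X)$ an $(n{+}1)$-dimensional Poincar\'e pair, and in particular $\S X \simeq \S K \vee S^{n+1}$. Thickening $S^{n+1} \subset S^{n+3}$ along its trivial normal bundle gives
\[
S^{n+3} \;=\; (S^{n+1} \times D^2) \cup_{S^{n+1} \times S^1} (D^{n+2} \times S^1),
\]
which refines using the decomposition of $S^{n+1}$. The first natural candidate is
\[
Y_1 \;:=\; \partial(W_1 \times D^2) \;=\; (W_1 \times S^1) \cup_{X \times S^1} (X \times D^2),
\]
the boundary of the product Poincar\'e pair $(W_1, X) \times (D^2, S^1)$. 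This is a closed Poincar\'e complex of formal dimension $n{+}2$ with an obvious codimension one Poincar\'e embedding in $S^{n+3}$. However $Y_1$ in general has the wrong spine: when $X = S^p \times S^q$ with $W_1 = S^p \times D^{q+1}$, one computes $Y_1 = S^p \times S^{q+2}$, whose spine $S^p \vee S^{q+2}$ differs from $\S K = S^{p+1} \vee S^{q+1}$.

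The correct construction therefore must exploit both halves of the embedding symmetrically. I would build $Y$ as the common boundary of two $(n{+}3)$-dimensional Poincar\'e pairs $(V_i, Y)$ with $V_i \simeq \S W_i$, glued so that $V_1 \cup_Y V_2$ is homotopy equivalent to $S^{n+3}$. The Poincar\'e thickening of the $(n{+}2)$-dimensional space $\S W_i$ (which is not itself a Poincar\'e pair) to an $(n{+}3)$-dimensional Poincar\'e pair $(V_i, Y)$ should proceed via a Wall-style argument, using the splitting $\S X \simeq \S K \vee S^{n+1}$ to supply the required attaching data. The main obstacle is verifying that the resulting boundary $Y$ has spine exactly $\S K$: this requires a careful analysis of the cofiber sequence $\S X \to \S W_i \to \S(W_i/X)$ combined with Poincar\'e--Lefschetz duality for $(V_i, Y)$, so as to identify $Y \simeq \S K \cup_\beta D^{n+2}$ for an appropriate attaching map $\beta \colon S^{n+1} \to \S K$.
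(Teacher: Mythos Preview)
Your proposal correctly identifies the target: $Y$ should sit in $S^{n+3}$ with complement pieces homotopy equivalent to $\Sigma W_1$ and $\Sigma W_2$. But the construction itself is missing. Saying the thickening ``should proceed via a Wall-style argument'' and that the spine identification is ``the main obstacle'' is a restatement of the problem rather than a solution; you have not produced $Y$, nor the maps $Y \to \Sigma W_i$, nor verified Poincar\'e duality for the pairs $(V_i, Y)$. Your first attempt $Y_1 = \partial(W_1 \times D^2)$ was at least concrete, and your diagnosis of why it fails is correct, but the second attempt never leaves the level of a wish list.

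The paper supplies exactly the concrete construction you are reaching for: the \emph{fiberwise suspension}, or \emph{decompression}. Starting from the pushout expressing $X \subset S^{n+1}$ with complement $W_1 \amalg W_2$, form $S_{W_1} X = W_1 \cup (X \times [0,1]) \cup W_1$; this gives a codimension one Poincar\'e embedding of $W_1$ in $S^{n+2}$ with complement $\Sigma W_2$. Now swap roles and decompress again: set $Y := S_{\Sigma W_2}(S_{W_1} X)$, which sits in $S^{n+3}$ with complement pieces $\Sigma W_1$ and $\Sigma W_2$. The crucial feature is that each projection $Y \to \Sigma W_i$ comes with an evident \emph{section}, so the sum $\Sigma W_1 \vee \Sigma W_2 \to Y$ is $(n{+}1)$-connected by Mayer--Vietoris, and relative Hurewicz shows $Y$ is obtained from $\Sigma(W_1 \vee W_2) \simeq \Sigma K$ by attaching a single $(n{+}2)$-cell. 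The sections do all the work; no auxiliary surgery or thickening argument is needed.
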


The hypothesis of Theorem ~\ref{cod-one} implies that of Theorem
~\ref{period-thm}. We show that the converse need not hold: we will
construct infinite families of Poincar\'e complexes whose top cell falls
off after one suspension, but which fail to Poincar\'e embed in a
sphere in codimension one.  
Note however by a result of
Browder~\cite{Browder} that any such example must necessarily
embed in codimension two.
 
See \Sec2 for the definition of Whitehead products, and let 
$x,y\: S^n \to S^n \vee S^n$ be the inclusions into each summand.  Our
first infinite family is given by the ``Kervaire'' PL manifolds
(cf.~\cite[p.~120, Cor. 4.7]{Kosinski}).

\begin{bigprop} 
\label{Kervaire-example}
For any odd whole number $n \ne 1,3,7$, the cofiber  of the map
$$
[x, x] + [y, x] + [y, y]\: S^{2n-1} \to S^n \vee S^n
$$
is a $2n$-dimensional Poincar\'e complex $A_n$ which does not embed
in codimension one but whose top cell falls off after one suspension.
\end{bigprop}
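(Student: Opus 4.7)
The plan is to verify the three assertions—Poincar\'e duality, splitting of the top cell after one suspension, and obstruction to codimension-one embedding—separately. Write $\alpha = [x,x] + [y,x] + [y,y]\colon S^{2n-1} \to S^n \vee S^n$ and $A_n = (S^n \vee S^n) \cup_\alpha D^{2n}$.

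First I would establish the Poincar\'e structure and the top-cell splitting. With basis $a, b \in H^n(A_n;\mathbb{Z})$ dual to the two wedge summands and $c \in H^{2n}(A_n;\mathbb{Z})$ dual to the top cell, the coefficient of $[y,x]$ in $\alpha$ forces $a \cup b = \pm c$, while $a \cup a = b \cup b = 0$ since $n$ is odd and $H^{2n}(A_n;\mathbb{Z})$ is torsion-free. The resulting intersection matrix $\bigl(\begin{smallmatrix} 0 & 1 \\ -1 & 0 \end{smallmatrix}\bigr)$ is unimodular, so $A_n$ is a Poincar\'e complex. Since Whitehead products desuspend to zero, $\Sigma\alpha$ is null-homotopic, giving $\Sigma A_n \simeq (S^{n+1} \vee S^{n+1}) \vee S^{2n+1}$, which exhibits the top cell of $A_n$ splitting off after one suspension.

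The main obstacle is the non-embedding statement; I would argue via the Kervaire/Arf invariant. The coefficients of $[x,x]$ and $[y,y]$ in $\alpha$, viewed in $\pi_{2n-1}(S^n)$ via the respective wedge inclusions, define a quadratic refinement $q\colon H^n(A_n;\mathbb{Z}/2) \to \mathbb{Z}/2$ of the mod-$2$ intersection pairing. By Adams's Hopf-invariant-one theorem, $[\iota_n,\iota_n] \ne 0$ in $\pi_{2n-1}(S^n)$ whenever $n$ is odd with $n \ne 1,3,7$, so the two self-Whitehead contributions are genuinely nonzero, giving $q(\bar a) = q(\bar b) = 1$ with symplectic bilinear form pairing $\bar a$ and $\bar b$ to $1$. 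Thus $\mathrm{Arf}(q) = q(\bar a)\, q(\bar b) = 1$, so the Kervaire invariant of $A_n$ is nontrivial.

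A codimension-one Poincar\'e embedding $A_n \hookrightarrow S^{2n+1}$ would exhibit $A_n$ as the boundary of one of the complementary Poincar\'e pairs. By the cobordism invariance of the Kervaire invariant for Poincar\'e complexes in dimensions $\equiv 2 \pmod 4$ (Browder--Wall, extended to the Poincar\'e category by Ranicki), this forces $\mathrm{Kerv}(A_n) = 0$, contradicting the computation above. The subtle point I expect to spend most care on is checking that the quadratic refinement defining $\mathrm{Kerv}(A_n)$ is really encoded by the self-Whitehead coefficients of $\alpha$, and that its bordism invariance carries over from the smooth/PL setting to arbitrary Poincar\'e pairs.
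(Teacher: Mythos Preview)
Your treatment of Poincar\'e duality and the top-cell splitting matches the paper's. For the non-embedding, however, the paper takes a different and more elementary route. It first proves a lemma asserting that any three-cell Poincar\'e complex $(S^p \vee S^q)\cup_\alpha D^{p+q}$ which Poincar\'e embeds in $S^{p+q+1}$ is homotopy equivalent to $S^p \times S^q$ (deduced from the stable splitting of a codimension-one embedded complex along its two complementary pieces and Alexander duality). The problem then reduces to showing $A_n \not\simeq S^n \times S^n$: a projection $A_n \to S^n$ would restrict on the $n$-skeleton to $a\iota \vee b\iota$ with $\gcd(a,b)=1$, and precomposing with the attaching map yields $[\iota,\iota](a^2+ab+b^2)$, which must vanish. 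Since $[\iota,\iota]$ has order exactly $2$ for $n$ odd and $\ne 1,3,7$, this forces $a^2+ab+b^2\equiv 0\pmod 2$, hence $a,b$ both even---a contradiction.

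Your Arf-invariant approach is the high road and is morally the same computation in disguise (the mod-$2$ quadratic form $a^2+ab+b^2$ has Arf invariant $1$), but the bordism-invariance step you defer is genuinely delicate. The appeal to Ranicki is off target: the symmetric $L$-group $L^{4k+2}(\Bbb Z)$ vanishes, so the symmetric signature detects nothing in these dimensions, while the quadratic signature requires a degree-one normal map rather than a bare Poincar\'e pair. What you actually need is closer to Browder's Kervaire invariant for Wu-oriented Poincar\'e complexes, and checking that the induced framing from a putative codimension-one embedding yields the same quadratic refinement as the attaching-map coefficients is exactly the subtle point you anticipate. The paper's argument sidesteps all of this.
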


We assume the reader is familiar with Toda's book~\cite{Toda}.  Recall
that at the prime 2 one has an EHP-sequence with connecting map
$P\: \pi_{*+2} (S^{2n+1}) \to \pi_*(S^n)$.  Then following result
provides criteria for constructing infinite families of examples.

\begin{bigprop} 
\label{spherical-fibration}
Given a map $\alpha\: S^{p+q+1} \to S^{2q+1}$ with order $2^{r+1}$,
with $q$ even, $p \ge 2q$ and $r > 1$, let $A$ be the cofiber of the
map
$$
[y, x] + y P(\alpha)\: S^{p+q-1} \to S^p \vee S^q .
$$
If $2^r$ kills the image of $E\:\pi_p (S^q) \to \pi_{p+1}(S^{q+1})$,
then $A$ is a $(p+q)$-dimensional Poincar\'e complex whose top cell
falls off after one suspension. However, $A$ does not embed in
codimension one.
\end{bigprop}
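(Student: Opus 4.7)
The plan is to verify, in turn, that $A$ is a Poincar\'e complex, that its top cell splits off after one suspension, and that $A$ does not embed in codimension one.

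For Poincar\'e duality I use Hilton--Milnor. Because $p \ge 2q$ and $q$ is even (so $q \ge 2$), the only basic products in $x, y$ of degree $p + q - 1$ are $x$, $y$, and $[x, y]$: every other iterated Whitehead bracket has strictly larger degree in this range. Hence any element of $\pi_{p+q-1}(S^p \vee S^q)$ writes uniquely as $xa + yb + c[x, y]$, and inspection of $\phi = [y, x] + yP(\alpha)$ gives $a = 0$, $b = P(\alpha)$, $c = \pm 1$. The unit Whitehead coefficient $c = \pm 1$ makes the cup product pairing $H^p(A) \otimes H^q(A) \to H^{p+q}(A)$ unimodular; all remaining pairings vanish for dimensional reasons. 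Hence $A$ is a Poincar\'e complex of dimension $p + q$.

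For the top-cell splitting I suspend once to obtain
\[
\Sigma\phi \;=\; \Sigma[y, x] \;+\; y'\cdot EP(\alpha) \;\in\; \pi_{p+q}(S^{p+1} \vee S^{q+1}).
\]
Whitehead products desuspend trivially, so $\Sigma[y, x] = 0$. Because $|\alpha| = 2^{r+1}$, the classes $P(\alpha)$ and $EP(\alpha)$ are $2$-primary. Since $q$ is even, the EHP sequence is exact at the prime $2$ and gives $EP = 0$ after $2$-localization; together with $2$-primarity this forces $EP(\alpha) = 0$ integrally. Therefore $\Sigma\phi = 0$ and $\Sigma A \simeq S^{p+1} \vee S^{q+1} \vee S^{p+q+1}$.

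For the non-embedding I argue by contradiction. Assume $A \subset S^{p+q+1}$ is a codimension-one Poincar\'e embedding. Theorem~\ref{cod-one} then yields a Poincar\'e complex $Y^{p+q+2}$ with spine $\Sigma(S^p \vee S^q) = S^{p+1} \vee S^{q+1}$ and a codimension-one embedding in $S^{p+q+3}$. Expanding the attaching map of $Y$ via Hilton as $\psi = \pm[y', x'] + y'\gamma + x'\delta$, the Pontryagin--Thom collapse for a tubular neighbourhood of $A \subset S^{p+q+1}$, together with the construction of $Y$ from the decomposition $S^{p+q+1} = A_1 \cup_A A_2$, expresses $\gamma$ as a class in the image of $E\colon \pi_p(S^q) \to \pi_{p+1}(S^{q+1})$ that lifts $P(\alpha)$ through EHP. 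Chasing this identification through the EHP sequence produces a relation $2\alpha \equiv 0 \pmod{\mathrm{im}(E)}$ in $\pi_{p+q+1}(S^{2q+1})$, and the hypothesis that $2^r$ annihilates $\mathrm{im}(E\colon \pi_p(S^q) \to \pi_{p+1}(S^{q+1}))$ yields $2^r \alpha = 0$, contradicting $|\alpha| = 2^{r+1}$.

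The main obstacle is the non-embedding step: one must pin down how the $y'$-component of $\psi$ records $\alpha$ via the Pontryagin--Thom data of the codimension-one thickening of $A$, and show that this component lies in the image of $E$ and is an EHP lift of $P(\alpha)$. The first two parts are essentially formal (a Hilton computation and an EHP exactness statement); it is this last EHP-theoretic identification that supplies the arithmetic contradiction with the order of $\alpha$.
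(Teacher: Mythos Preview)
Your arguments for Poincar\'e duality and the top-cell splitting are fine and agree with the paper (the paper states the duality claim in one line---the $[y,x]$ term forces $H^*(A)\cong H^*(S^p\times S^q)$---and leaves the splitting implicit, but your Hilton and EHP justifications are correct).

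The non-embedding step, however, is not a proof: you yourself flag it as ``the main obstacle'' and never carry out the identification you need. Routing through Theorem~\ref{cod-one} produces a new Poincar\'e complex $Y$ via a double decompression, and there is no evident reason the $y'$-component $\gamma$ of its attaching map should be $E$ of anything, let alone bear a specified relation to $P(\alpha)$. Your phrase ``lifts $P(\alpha)$ through EHP'' does not typecheck ($\gamma\in\pi_{p+1}(S^{q+1})$ while $P(\alpha)\in\pi_{p+q-1}(S^q)$), and the arithmetic you write down, ``$2\alpha\equiv 0\pmod{\mathrm{im}(E)}$ hence $2^r\alpha=0$,'' does not follow: if $2\alpha$ merely lies in a subgroup killed by $2^r$, you only recover $2^{r+1}\alpha=0$, which you already knew.

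The paper's route is both shorter and avoids all of this. By Lemma~\ref{non-embed}, a codimension-one embedding would force $A\simeq S^p\times S^q$, so in particular there would be a map $A\to S^q$ of degree one on the $q$-cell. Restricting to the skeleton gives $f\vee 1\:S^p\vee S^q\to S^q$ with $f\in\pi_p(S^q)$, and nullhomotopy of the composite with the attaching map reads
\[
[\iota,f]+P(\alpha)=0\quad\text{in }\pi_{p+q-1}(S^q).
\]
Now the Barcus--Barratt theorem (with higher Whitehead products vanishing at $2$) gives $[\iota,f]=[\iota,\iota]\circ\Sigma^{q-1}f = P(E^{q+1}f)$, so $P(E^{q+1}f+\alpha)=0$; EHP exactness writes $E^{q+1}f+\alpha=H(\beta)$, and James's theorem ($2H=0$ for $q$ even) yields $2\alpha+2E^{q+1}f=0$. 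Multiplying by $2^{r-1}$ and using the hypothesis $2^r\cdot Ef=0$ kills the second term and gives $2^r\alpha=0$, the desired contradiction. This is the EHP-theoretic identification you were gesturing at, but obtained directly from the retraction obstruction rather than from any auxiliary complex $Y$.
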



The Poincar\'e complex $A^{19}$ with attaching map
$
[y, x] + y P (E^5\sigma)\: S^{18} \to S^{13} \vee S^6
$ is an example of Proposition~\ref{spherical-fibration}, as Toda's
first table~\cite[p.~186]{Toda} shows the image of
$E\:\pi_{13}(S^6)\to \pi_{14}(S^7)$ has order at most $4$, whereas
$E^5 \sigma \in \pi_{20}(S^{13})$ has order $16$.  

Recall the Adams self-map
$W\: M^{n+8}_{16 \iota} \to M^n_{16\iota}$ of the Moore space which exists for $n > 10$
for stability reasons~\cite[Lem.~12.5]{Adams}.  We also need an
unstable Adams self-map:

\begin{lem}
\label{Adams-self-map}
For $n\ge 9$, there exists a map
$V\: M^{n+8}_{8 \iota}\to M^n_{8 \iota}$ so that the composite 
$
S^{n+7} @> i >>  M^{n+8}_{8 \iota} @> V >> M^n_{8 \iota} @>j >>  S^n
$ is homotopic to $2 \sigma$.
\end{lem}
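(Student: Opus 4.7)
The plan is to construct $V$ by factoring Adams's stable self-map $W\:M^{n+8}_{16\iota}\to M^n_{16\iota}$ (for which $j_{16}Wi_{16}=\sigma$, since $n\ge 9$ puts us in the stable range) through a natural map $\iota\:M^n_{8\iota}\to M^n_{16\iota}$ coming from $0\to\Bbb Z/8\to\Bbb Z/16\to\Bbb Z/2\to 0$.

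I first set up two canonical Moore-space maps via the map-of-cofiber-sequences construction. The sequence $0\to\Bbb Z/8\xrightarrow{2}\Bbb Z/16\to\Bbb Z/2\to 0$ yields $\iota\:M^n_{8\iota}\to M^n_{16\iota}$ characterized by $\iota\circ i_8=2\cdot i_{16}$ and $j_{16}\circ\iota=j_8$. Dually, $0\to\Bbb Z/2\to\Bbb Z/16\twoheadrightarrow\Bbb Z/8\to 0$ gives $q\:M^n_{16\iota}\to M^n_{2\iota}$ with $q\circ i_{16}=i_2$ and $j_2\circ q=8\cdot j_{16}$. Together these form a stable cofibration $M^n_{8\iota}\xrightarrow{\iota}M^n_{16\iota}\xrightarrow{q}M^n_{2\iota}$, so in particular $q\iota=0$.

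Now form $V':=W\circ\S^8\iota\:M^{n+8}_{8\iota}\to M^n_{16\iota}$. I claim $V'$ factors through $\iota$ as $V'=\iota V$ for some $V\:M^{n+8}_{8\iota}\to M^n_{8\iota}$. Granted this, the desired identity follows from a direct chase of definitions:
\[
j_8Vi_8 \;=\; (j_{16}\iota)V i_8 \;=\; j_{16}(\iota V)i_8 \;=\; j_{16}W(\S^8\iota\circ i_8) \;=\; j_{16}W(2 i_{16}) \;=\; 2\sigma.
\]
The factoring claim is the crux. In the stable range the above cofibration is also a fibration, so by Puppe-sequence exactness the obstruction to $V'$ factoring through $\iota$ is $qV'=qW\S^8\iota\in[M^{n+8}_{8\iota},M^n_{2\iota}]$. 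To kill this obstruction I invoke naturality of Adams's construction: the classical Adams self-map $A\:\S^8M^n_{2\iota}\to M^n_{2\iota}$ on the mod-$2$ Moore space can be chosen so that $qW=A\circ\S^8q$. Combined with $q\iota=0$, this gives $qW\S^8\iota=A\S^8(q\iota)=0$, as required.

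The main obstacle will be verifying the compatibility $qW=A\S^8q$; both composites realize the same $v_1^4$-class on the mod-$2$ level, but a priori they may differ by a nontrivial element of $[M^{n+8}_{16\iota},M^n_{2\iota}]$. One must analyze this potential ambiguity, either by exploiting the indeterminacy in Adams's choice of $W$ (or $A$) to achieve equality on the nose, or by showing the ambiguity is annihilated upon precomposition with $\S^8\iota$. In the latter reduction one ultimately confronts a low-stem calculation in $2$-primary $\pi^s_8(M/2)$, controlled by the facts that $j_2(qWi_{16})=8\sigma$ and $16\sigma=0$.
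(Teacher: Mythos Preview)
Your approach is quite different from the paper's, and it has a genuine gap that you yourself flag: the compatibility $qW = A\,\S^8 q$ is asserted but not proved.  You correctly identify this as ``the main obstacle'' and then offer only two possible strategies (adjust the choice of $W$ or $A$; or show the discrepancy dies after precomposing with $\S^8\iota$), neither of which you carry out.  As written, the argument is a plan, not a proof.  There is also a minor range issue: the paper only asserts that $W$ exists for $n>10$, so even if the factoring went through you would not obtain $V$ at $n=9$ directly from $W$ without a separate desuspension argument.

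The paper's proof avoids all of this by working directly with the mod-$8$ Moore space from the start, never invoking $W$ at all.  The key input is Toda's computation \cite[Cor.~3.7]{Toda} that the Toda bracket $\{8\iota,\,E\sigma',\,8\iota\}$ of the sequence
\[
S^8 \xleftarrow{\,8\iota\,} S^8 \xleftarrow{\,E\sigma'\,} S^{15} \xleftarrow{\,8\iota\,} S^{15}
\]
contains zero.  The paper observes just before the proof that whenever a Toda bracket $\{h,g,f\}$ contains zero one obtains a map $V\:\S M_f \to M_h$ with $j\circ V\circ \S i \simeq \S g$.  Applying this with $f=h=8\iota$ and $g=E\sigma'$ immediately yields $V\:M^{17}_{8\iota}\to M^9_{8\iota}$ with $jVi \simeq E^2\sigma' = 2\sigma$; the cases $n>9$ follow by suspension.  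This is both shorter and more elementary than your factoring-through-$W$ strategy, and it lands at $n=9$ on the nose.
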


We will use Adams $e$-invariant~\cite{Adams} to give a simple proof of
the following result, known to Mahowald
\cite[Thm.~1.5]{Mahowald:Image-J-EHP}, and possibly also known to
Barratt and Toda.

\begin{bigthm} \label{barratt-mahowald}
\begin{enumerate}
\item
There exist homotopy classes $N_k \in \pi_{8k}(S^5)$ of
order 8, where $N_1 = \nu$, and for $k > 1$, $N_k$ is the composite
$$
S^{8k} @> i >> M^{8k+1}_{8 \iota} @> V^{\circ{(k-1)}} >> M^9_{8 \iota}
@>\nu^\sharp >> S^5,
$$
where $\nu^\sharp$ is given by a nullhomotopy of 
$8 \nu\: S^8 \to S^5$.
\smallskip
\item
There exist homotopy classes $S_k \in \pi_{8k}(S^9)$ of order 16,
where $S_2 = \sigma$, and for $k > 2$, $S_k$ is the composite
$$
S^{8k} @> i >> M^{8k+1}_{16 \iota} @> W^{\circ (k-2)} >> 
M^{17}_{16 \iota} @> \sigma^\sharp >> S^9,
$$
where $\sigma^\sharp$ is given by a nullhomotopy of $16\sigma$.
\end{enumerate}
\end{bigthm}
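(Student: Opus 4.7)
The plan for both parts is the same: bound the order of the constructed composite from above by a cofibre sequence argument, and from below by computing its Adams $e$-invariant. For the upper bound, I would note that in the cofibre sequence $S^{8k} \xrightarrow{8\iota} S^{8k} \xrightarrow{i} M^{8k+1}_{8\iota}$ the composition $i \circ 8\iota$ is null, so $8 N_k = \nu^\sharp \circ V^{\circ(k-1)} \circ (i \circ 8\iota) = 0$; the identical argument with $16\iota$ and $W$ gives $16 S_k = 0$.

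For the lower bound, the base cases $N_1 = \nu$ and $S_2 = \sigma$ follow from the classical $2$-local computations $e(\nu) = 1/8$ and $e(\sigma) = 1/16$ in $\mathbb{Q}/\mathbb{Z}$, which force orders at least $8$ and $16$ respectively. For the inductive step, I would show that composition with each Adams self-map preserves the $2$-primary denominator of the $e$-invariant. The reduced $K^0$ of $M^n_{2^r \iota}$ is an extension of $\mathbb{Z}/2^r$ by itself, and any self-map whose bottom-to-top composite has $2$-primary $e$-invariant of exact order $2^r$ --- as does $jVi = 2\sigma$ for $V$ (with $r = 3$), and the analogous composite for $W$ (with $r = 4$) --- must act as a unit of $(\mathbb{Z}/2^r)^\times$ on this $K$-theory. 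The standard formula expressing the $e$-invariant of a composite through a Moore space in terms of this $K$-theoretic action then presents $e(N_k)$ as a unit multiple of $e(N_{k-1})$ in $\mathbb{Q}/\mathbb{Z}$, and $e(S_k)$ as a unit multiple of $e(S_{k-1})$, keeping the $2$-primary denominator of the $e$-invariant at $8$ (respectively $16$) and completing the induction.

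The main obstacle is verifying that the $K$-theoretic action of the Adams self-map is indeed by a unit modulo $2^r$. For the stable $W$ this is built into Adams's construction via $\psi^3$-eigenspaces. For the unstable refinement $V$ from Lemma~\ref{Adams-self-map}, one must deduce it from the bare property $jVi = 2\sigma$, working in the range $n \ge 9$ where the relevant unstable obstructions vanish and the $K$-theoretic calculation reduces to the stable one. A secondary subtlety is that ``a unit'' is only well-defined modulo the image of the EHP connecting map $P$ on the pertinent homotopy groups of the Moore space, so one must check that this indeterminacy has $2$-adic valuation strictly larger than the $e$-invariants being propagated --- which should follow from dimension/connectivity considerations given $p \geq 2q$-type bounds inherent in the ranges where the self-maps are constructed.
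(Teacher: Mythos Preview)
Your plan is essentially the paper's: bound the order from above via the Moore-space cofibre sequence, and from below via the Adams $e$-invariant, the key input being that $V$ and $W$ are $K$-theory isomorphisms (the paper simply cites Adams \cite[Lem.~12.5]{Adams} for this rather than deriving it from $jVi \simeq 2\sigma$). Two small points to clean up: the reduced $K$-group of $M^n_{2^r\iota}$ in the relevant degree is just $\Bbb Z/2^r$, not a self-extension; and for the $N_k$ family one must pass from the complex $e$-invariant $e_{\Bbb C}(\nu)$ (which has order only $4$) to the real $e$-invariant $e_{\Bbb R}$ to obtain the full order $8$---your statement ``$e(\nu)=1/8$'' hides this step. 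Your final EHP remark is a red herring: the lower-bound argument is purely $K$-theoretic and no unstable indeterminacy of that kind enters.
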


Note that Mahowald has a powerful framework that explains and extends
these elements ~\cite[Thm.~1.5]{Mahowald:Image-J-EHP}.  Using these
two families and Proposition ~\ref{spherical-fibration}, we obtain

\begin{bigcor} \label{families}
The cofibers of the maps
\begin{align*}
[y, x] + y P(S_k)\: S^{8k-2} &\to S^{8k-5} \vee S^4
\\
[y, x] + y P(N_k)\: S^{8k-2} &\to S^{8k-3} \vee S^2
\end{align*}
are $(8k-1)$-dimensional Poincar\'e complexes whose top cells fall off
after one suspension, but do not embed in codimension one.
\end{bigcor}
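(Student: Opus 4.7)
The plan is to apply Proposition~\ref{spherical-fibration} twice, once with $\alpha = S_k$ and once with $\alpha = N_k$, drawing these classes from Theorem~\ref{barratt-mahowald}. For the $S_k$-family, I take $q = 4$ (even) and $p = 8k-5$, so that $p+q+1 = 8k$ and $2q+1 = 9$ identify $\alpha = S_k \in \pi_{8k}(S^9) = \pi_{p+q+1}(S^{2q+1})$, while $S^{p+q-1} = S^{8k-2}$ and $S^{p} \vee S^{q} = S^{8k-5} \vee S^{4}$ match the source and target of the attaching map $[y,x] + y P(S_k)$. By Theorem~\ref{barratt-mahowald}(2), $S_k$ has order $16 = 2^{r+1}$, so we take $r = 3 > 1$; the hypothesis $p \ge 2q$ becomes $8k-5 \ge 8$, i.e.\ $k \ge 2$, which is precisely the range in which $S_k$ is defined. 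For the $N_k$-family, set $q = 2$ and $p = 8k-3$; then $N_k \in \pi_{8k}(S^5) = \pi_{p+q+1}(S^{2q+1})$ has order $8 = 2^{r+1}$, giving $r = 2 > 1$, and $p \ge 2q$ holds for all $k \ge 1$.

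The only hypothesis of Proposition~\ref{spherical-fibration} still to verify is that $2^r$ kills the image of the suspension $E \colon \pi_{p}(S^{q}) \to \pi_{p+1}(S^{q+1})$: explicitly, $4 \cdot \mathrm{Im}(E \colon \pi_{8k-3}(S^2) \to \pi_{8k-2}(S^3)) = 0$ for the $N_k$-family, and $8 \cdot \mathrm{Im}(E \colon \pi_{8k-5}(S^4) \to \pi_{8k-4}(S^5)) = 0$ for the $S_k$-family. Since the construction is $2$-primary throughout (both $\alpha$ and $y P(\alpha)$ live $2$-locally in the torsion of interest), I may localize at $2$. At the prime $2$, the EHP spectral sequence identifies $\mathrm{Im}(E)$ with the kernel of the Hopf invariant $H \colon \pi_{p+1}(S^{q+1}) \to \pi_{p+1}(S^{2q+1})$. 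In the $N_k$-case, Selick's theorem gives $4 \cdot \pi_n(S^3) = 0$ for $n \ge 3$, so $4 \cdot \mathrm{Im}(E) = 0$ is immediate. In the $S_k$-case, James's $2$-primary exponent bound for $\pi_*(S^5)$, combined with EHP-control of $\ker(H \colon \pi_{8k-4}(S^5) \to \pi_{8k-4}(S^9))$, yields that $\mathrm{Im}(E)$ is annihilated by $8$ in the relevant metastable range.

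The main obstacle is precisely this second step: a uniform, $k$-independent $2$-primary bound on $\mathrm{Im}(E)$. The paper dispatches its single illustrative example $A^{19}$ following Proposition~\ref{spherical-fibration} by a Toda-table lookup, but the infinite families of the corollary demand a general structural argument rather than dimension-by-dimension verification. Once this exponent estimate is in hand, Proposition~\ref{spherical-fibration} delivers in one stroke the Poincar\'e-complex structure on each cofiber, the splitting of the top cell after one suspension, and the obstruction to a codimension-one Poincar\'e embedding, which together constitute the claim of the corollary.
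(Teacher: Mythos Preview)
Your overall strategy matches the paper's: reduce both families to Proposition~\ref{spherical-fibration} by verifying the exponent hypothesis on $\mathrm{Im}(E)$. Your treatment of the $N_k$-family is correct and coincides with the paper's parenthetical alternative (Selick's exponent $4$ for $S^3$); the paper's primary argument there is actually sharper: since $\eta_*\colon \pi_p(S^3)\to \pi_p(S^2)$ is an isomorphism, the composite $\pi_p(S^3)\xrightarrow{\eta_*}\pi_p(S^2)\xrightarrow{E}\pi_{p+1}(S^3)$ is $x\mapsto (E\eta)\cdot Ex$, and $E\eta$ has order $2$, so $\mathrm{Im}(E)$ is already killed by~$2$.

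The $S_k$-case, however, has a genuine gap. James's $2$-primary exponent for $S^{2n+1}$ is $4^n$, which for $S^5$ gives only $16$; that is one power of $2$ too weak for Proposition~\ref{spherical-fibration} with $r=3$. Your appeal to ``EHP-control of $\ker(H\colon \pi_{8k-4}(S^5)\to\pi_{8k-4}(S^9))$'' does not supply the missing factor: exactness of the EHP sequence tells you $\mathrm{Im}(E)=\ker(H)$, but gives no further divisibility information, and James's $2H=0$ applies on odd-dimensional sources (even $q$), not here. The paper closes this gap by citing Selick's theorem~\cite{Selick} directly for $S^5$: the $2$-primary exponent of $S^5$ is $8$, so in particular $8\cdot\mathrm{Im}(E\colon\pi_p(S^4)\to\pi_{p+1}(S^5))=0$. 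Replace your James-plus-EHP sentence with that citation and the argument is complete.
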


The surgery exact sequence shows the above examples have the homotopy
type of smooth manifolds.  Other Poincar\'e complexes whose top
cell falls off after one suspension are provided by closing up Seifert
surfaces of high dimensional knots. However, we lack criteria for
deciding when these fail to embed in codimension one.

In \Sec\ref{Board-Steer} we review Boardman and Steer's work on Hopf
invariants and prove Theorem ~\ref{period-thm}.  We give a criterion
for Poincar\'e duality when the top cell splits off after one
suspension.  In \Sec\ref{cod-one-case} we prove Theorem
~\ref{cod-one}, and in \Sec\ref{spherical-fibration-proof},
Propositions ~\ref{spherical-fibration} and ~\ref{Kervaire-example}.
In \Sec\ref{barratt-mahowald-proof} we discuss Toda brackets and prove
Theorem ~\ref{barratt-mahowald} and Corollary ~\ref{families}.  We
explain in \Sec\ref{knot-period} how a variant of Theorem
~\ref{period-thm} gives rise to a periodicity operator for knot
theory, inducing the four-fold periodicity of the knot cobordism
groups.  In \Sec\ref{period-notes}, we discuss our linear notion of
periodicity, and explain some exponential periodicity of Mahowald.

\begin{acks} 
The authors are grateful to Mark Mahowald for his insight and
guidance.  We are also indebted Brayton Gray and Bob Bruner for
helping us understand how Adams's work ~\cite{Adams} proves
Theorem~\ref{barratt-mahowald}. The first author wishes to thank
Andrew Ranicki for help with the surgery theory literature, Diarmuid
Crowley for discussions in connection with Theorem
~\ref{spherical-fibration}, and Matthias Kreck and Peter Teichner for
pointing out that
$\S \Bbb RP^2$ is the spine of $\text{SU}(3)/\text{SO(3)}$.
\end{acks}

\section{Poincar\'e duality and Hopf invariants}
\label{Board-Steer}

The spaces in this paper are assumed to have the homotopy type of CW
complexes. Basepoints are always assumed to be non-degenerate.  If $X$
is a based space then $\S X$ denotes its reduced suspension, and
$\Omega X$ denotes its based loop space.  The smash product of based
spaces $A$ and $B$ is denoted $A\smsh B$.  Let $[A,B]$ denote the
(based) homotopy classes of maps from $A$ to $B$, and let $\{A,B\}$ be
the abelian group of stable homotopy classes of maps from $A$ to $B$.

See~\cite{Wall3} for the definition of a {\it Poincar\'e complex}.  We
consider only finite oriented Poincar\'e complexes.  If $X$ is an
$n$-dimensional Poincar\'e complex, there is a {\em fundamental class}
$[X] \in H_n(X)$ giving a cap product isomomorphism
\begin{equation}
\label{cap-product-isomorphism}
\cap [X] \: H^k(X) @>\cong>> H_{n{-}k}(X) \qquad\text{for all
  integers $k$.}
\end{equation} 
If $X$ is a 1-connected finite complex with a class $[X] \in H_n(X)$
satisfying~\eqref{cap-product-isomorphism}, then $X$ is a Poincar\'e
complex. Similar remarks hold for Poincar\'e pairs.

If $A$ and $B$ have the homotopy type of finite complexes, then a
(stable) map
$d\:S^n \to A\smsh B$ is an {\it $S$-duality map} if 
and only if the slant product homomorphism
$
/d_*[S^n] \: \tilde H^*(A) \to \tilde H_{n{-}*}(B)
$
is an isomorphism in all degrees. Here $\tilde H_*$ means reduced singular
homology, and $[S^n] \in \tilde H_n(S^n)$ denotes the generator.


We rely on Boardman and Steer's treatment of Hopf invariants
is~\cite{B-S}, much as we did earlier~\cite{Richter:Williams(1992)}.
Let $B$ be a based space.  The {\em suspension} map
$
E\: B\to \Omega \S B
$
is adjoint to the identity.
The {\it James Hopf invariant} 
$
H\:\Omega \S B \to \Omega\S (B \smsh B)
$ is a natural map~\cite[3.10]{B-S} with $H\circ E$ canonically null
homotopic.  (We will not need this, but
$
B @> E>> \Omega \S B @>H>>\Omega\S (B \smsh B)
$ is a metastable homotopy fiber sequence.)  $H$ gives a
natural map
$
H\:[\S A,\S B] \to [\S A,\S B \smsh B]
$.  The {\em Hopf invariant}
$
\lambda\:[\S A,\S B] \to [\Sigma^2 A,\S B \smsh \S B]
$ is the natural map~\cite[3.15]{B-S} given by suspending $H$.  
Boardman and Steer stress the Cartan formula ~\cite[thm.~3.15, def.~2.1]{B-S}: 
\begin{equation}
\label{Cartan-formula}
\lambda (f + g ) = \lambda (f) + f\cdot g + \lambda (g) \in [\S^2 A,
  \S B \^ \S B ],
\qquad\text{for $f, g\:\S A @>>> \S B$},
\end{equation}
where the {\em cup product} term $f\cdot g$ means the composite
$$
\S^2 A @>\S^2\Delta_A>> \S^2 (A \^ A) @>\text{shuffle}>> \S A\^\S A 
@>f\^ g >> \S B \^ \S B     .
$$
Following ~\cite{B-S}, we use right suspensions, so 
$\S A := A \^ S^1$, and suppress shuffle maps.   Note that by shuffling
the suspension coordinates around we can show that 
\begin{equation}
\label{cup-Whitehead-product-0}
f\cdot g = 0 \in [\S^2 A, \S B \^ \S B]\qquad
\text{if $\S f = 0 \in [\S^2 A, \S^2 B]$}
\end{equation}

Let $\tau_K \: K\smsh K \to K\smsh K$ be the twist map (which switches
factors).  The proof of ~\cite[Thm.~3.17]{B-S} (which assumed $B$ is a
suspension) generalizes to prove
\begin{equation}
\label{symmetrize-Hopf-invariant}
(f \^ f) \circ \S^2\Delta_A = (1 - \tau_{\S B}) \circ \lambda(f) +
\S^2\Delta_B \circ \S f \in [\S^2 A, \S B \^ \S B].
\end{equation}
As Boardman and Steer stress, the following diagrams commute up to
homotopy, for map $f, g\:\S A @>>> \S B$, because the twist
$\tau_{S^1}$ on
$S^1 \^ S^1$ has degree -1.  
\begin{equation}
\label{twist-and-suspend}
\begin{CD}
\S^2 B \^ B @>\text{shuffle}>> \S B \^ \S B
\\
@V -\S^2 \tau_B VV @V \tau_{\S B} VV 
\\
 \S^2 B \^ B @>\text{shuffle}>> \S B \^ \S B 
\end{CD}
\qquad\qquad
\begin{diagram}
\node{\S^2 A}   \arrow{e,t}{f\cdot g}   
\arrow{se,b}{-g\cdot f}
\node{\S B \^ \S B}   \arrow{s,r}{\tau_{\S B}}
\\
\node[2]{\S B \^ \S B}
\end{diagram}
\end{equation}

For maps $f\: \S P \to X$ and $g\: \S Q \to X$, the Whitehead product
~\cite[4.2]{B-S} $[f,g]\: \S P \^ Q \to X$ is defined as the unique
homotopy class so that the composite 
$\S (P \times Q) @>\S\pi_{12}>> \S P \^ Q @>[f,g]>> X$ is the commutator 
$(f\circ \S \pi_1, g\circ \S \pi_2)$.  We extend ~\cite[Thm.~4.6]{B-S}
to the case when $P$ and $Q$ are not required to be suspensions:

\begin{lem}
\label{Hopf-invariant-Whitehead-product}
Given maps $f\: \S P \to \S B$ and $g\: \S Q \to \S B$, the Whitehead
product
$[f,g] \: \S P \^ Q \to \S X$ has Hopf invariant
\begin{equation}
\label{Hopf-invariant-Whitehead-product:1+tau}
\lambda([f,g]) = (1 + \tau_{\S X}) \circ (f \^ g) \: \S^2 P \^ Q @>>> 
\S X \^ \S X.
\end{equation}
In particular, for the Whitehead product 
$[\iota, \iota]\: \S X\^X \to \S X$, we have 
$$
\lambda([\iota, \iota]) = 1 + \tau_{\S X}\: \S^2 X\^X @>>> \S X \^ \S X.
$$
\end{lem}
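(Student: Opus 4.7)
The plan is to reduce the problem to computing the Hopf invariant of a commutator in the (non-abelian) group $[\S(P \times Q), \S B]$ and then apply the Cartan formula~\eqref{Cartan-formula} iteratively. Set $u := f \circ \S \pi_1$ and $v := g \circ \S \pi_2$. By the very definition of the Whitehead product recalled just above the lemma, the composite $[f,g] \circ \S \pi_{12}$ equals the group-theoretic commutator $u + v - u - v$. So by naturality it suffices to compute $\lambda(u + v - u - v)$ and then cancel the trailing factor $\S^2 \pi_{12}$; the James splitting $\S(P \times Q) \simeq \S P \vee \S Q \vee \S(P \^ Q)$ exhibits $\S \pi_{12}$ as a split epimorphism, so precomposition with $\S^2 \pi_{12}$ is injective on the relevant homotopy set and the cancellation is legal.

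The cup product $\phi \cdot \psi$ is bilinear, immediately from its defining formula (smash is bilinear). Applying Cartan to $\phi + (-\phi) = 0$ gives the identity $\lambda(-\phi) = -\lambda(\phi) + \phi \cdot \phi$. A direct four-step expansion using these rules, in which the terms $u \cdot u$ and $v \cdot v$ produced by $\lambda(-u)$ and $\lambda(-v)$ cancel against mixed cup products generated by the subsequent Cartan steps, collapses to
\begin{equation*}
\lambda(u + v - u - v) \;=\; u \cdot v \,-\, v \cdot u.
\end{equation*}
The right-hand diagram of~\eqref{twist-and-suspend} converts $-v \cdot u$ into $\tau_{\S B} \circ (u \cdot v)$, yielding $\lambda(u+v-u-v) = (1 + \tau_{\S B}) \circ (u \cdot v)$.

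It remains to identify $u \cdot v$ with $(f \^ g) \circ \S^2 \pi_{12}$. Unwinding the definition, $u \cdot v$ is the composite $\S^2(P \times Q) \xrightarrow{\S^2 \Delta} \S^2\bigl((P\times Q)\^(P \times Q)\bigr) \xrightarrow{\mathrm{sh}} \S(P\times Q) \^ \S(P \times Q) \xrightarrow{u \^ v} \S B \^ \S B$, and $u \^ v$ factors as $(f \^ g) \circ (\S \pi_1 \^ \S \pi_2)$. A shuffle chase shows that under the canonical identification $\S P \^ \S Q = \S^2 (P \^ Q)$, the map $(\S \pi_1 \^ \S \pi_2) \circ \mathrm{sh} \circ \S^2 \Delta_{P \times Q}$ coincides with $\S^2 \pi_{12}$. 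Assembling all of the above gives
\begin{equation*}
\lambda([f,g]) \circ \S^2 \pi_{12} \;=\; (1 + \tau_{\S B}) \circ (f \^ g) \circ \S^2 \pi_{12},
\end{equation*}
and the cancellation justified in the first paragraph produces the formula~\eqref{Hopf-invariant-Whitehead-product:1+tau}. The stated special case is simply its evaluation at $P = Q = X$, $B = X$, $f = g = \iota$.

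The main obstacle is not any single conceptual step but the bookkeeping of signs and shuffles. Two points in particular need care: first, verifying that the four-term Cartan expansion really collapses as advertised (so that the diagonal cup products produced by $\lambda(-u)$ and $\lambda(-v)$ neatly kill the mixed terms from the intermediate steps), and second, checking that the shuffle isomorphism $\S^2(P \^ Q) \cong \S P \^ \S Q$ used to rewrite $u \cdot v$ is the same one implicit in the target of $\lambda$, so that no spurious sign appears in the final identification. Once these sign checks are in order the proof is a purely formal manipulation.
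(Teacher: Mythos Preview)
Your argument is correct and follows the same overall strategy as the paper: pull back along the split surjection $\S\pi_{12}$, compute the Hopf invariant of the commutator $u+v-u-v$ via the Cartan formula to get $u\cdot v - v\cdot u$, convert to $(1+\tau_{\S B})\circ(u\cdot v)$ via~\eqref{twist-and-suspend}, and identify $u\cdot v$ with $(f\wedge g)\circ\S^2\pi_{12}$. The only real difference is in how the Cartan step is carried out. You expand the four-term sum directly using $\lambda(-\phi)=-\lambda(\phi)+\phi\cdot\phi$ and bilinearity of the cup product; the paper instead rewrites $f_1+g_2=(f_1,g_2)+(g_2+f_1)$ and applies Cartan to both sides, using~\eqref{cup-Whitehead-product-0} (the commutator suspends trivially, so its cup product with anything vanishes) to kill the cross term. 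The paper's trick is slightly slicker and sidesteps exactly the sign and cancellation bookkeeping you flag in your final paragraph, but your direct expansion is equally valid and perhaps more transparent.
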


\begin{proof}
The map $\S\pi_{12}\: \S (P \times Q) \to \S (P \^ Q)$ is a stable
surjection, so it suffices to prove
~\eqref{Hopf-invariant-Whitehead-product:1+tau} pulled back to
$\S (P \times Q)$.  Write
$f_1 = f\circ \S \pi_1, g = g\circ \S \pi_2\: \S P \times Q \to \S X$.
By definition, 
$
[f,g]\circ \S\pi_{12} = (f_1, g_2) \in [\S (P \times Q), 
\S X]$. Write
$(f_1, g_2) = F - G$, where 
$F = f_1 + g_2$ and 
$G = g_2 + f_1$.  Then $F = (f_1, g_2) + G$.  By
~\eqref{cup-Whitehead-product-0}, the cup product $(f_1, g_2) \cdot G$
is nullhomotopic, because $\S (f_1, g_2)$ is nullhomotopic.  Thus
$$
\lambda(f_1) + f_1 \cdot g_2 + \lambda(g_2) = 
\lambda((f_1, g_2)) + \lambda(g_2) +    g_2 \cdot f_1 +\lambda(f_1)
$$
by the Cartan formula~\eqref{Cartan-formula}.  By
~\eqref{twist-and-suspend} and $f_1 \cdot g_2$ being a suspension, we
have
$$
\lambda((f_1, g_2)) = f_1 \cdot g_2 - g_2 \cdot f_1 
= 
(1 + \tau_{\S X}) \circ f_1 \cdot g_2
=
(1 + \tau_{\S X}) \circ (f\^g) \circ \S^2\pi_{12}.
\qed
$$
\renewcommand{\qed}{}
\end{proof}

Given a map
$f\: \S A \to \S B$ with cofiber $X$, the  diagram
\begin{equation}
\label{diagonal=Hopf-invariant}
\begin{CD}
\S A @>f>> \S B @>>> X @>\partial>> \S^2 A 
\\
@.  @. @V\Delta VV @VV \lambda(\alpha) V
\\
@. @. X \^ X @<<< \S B \^ \S B
\end{CD}
\end{equation}
is homotopy commutative~\cite[Thm.~5.14]{B-S}.  This
immediately implies

\begin{prop}
\label{B-S-criterion}
Let $L$ be a connected finite complex of dimension $\le n-3$, and
$\alpha\:S^{n-1} \to \S L$ a based map with cofiber $X$. Then $X$ is a
Poincar\'e complex if and only if $\lambda(\alpha) \:S^n \to \S L
\smsh \S L$ is an $S$-duality map.
\end{prop}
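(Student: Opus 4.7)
The strategy would be to use the homotopy commutative square in \eqref{diagonal=Hopf-invariant} to identify $\Delta_*[X] \in \tilde H_n(X \wedge X)$ with the image $(i \wedge i)_* \lambda(\alpha)_*[S^n]$, where $i\:\S L \hookrightarrow X$ is the inclusion, and then to transfer the resulting slant-product isomorphism criterion between $X$ and $\S L$ using the fact that $i$ is a (co)homology isomorphism through a range controlled by the dimension hypothesis.

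First I would set up the Poincar\'e duality criterion. Since $\dim \S L \le n-2$, we have $\tilde H_{n-1}(\S L) = 0$, so $\alpha$ is automatically null on homology and $H_n(X) \cong \Bbb Z$; choose the generator $[X]$ so that the pinch $\partial\: X \to S^n$ satisfies $\partial_*[X] = [S^n]$. The complex $X$ is simply connected because $\S L$ is. By the criterion recalled after \eqref{cap-product-isomorphism}, $X$ is a Poincar\'e complex iff $\cap[X]$ is an isomorphism in every degree; the cases $k = 0$ and $k = n$ are automatic from $H_n(X) \cong \Bbb Z$ and $H_{n-1}(X) = 0$, so the content reduces to requiring that the slant pairing
\[
\tilde H^k(X) \xrightarrow{/\Delta_*[X]} \tilde H_{n-k}(X)
\]
be an isomorphism for $1 \le k \le n-1$.

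Next I would apply \eqref{diagonal=Hopf-invariant}, which supplies a homotopy $\Delta \simeq (i \wedge i) \circ \lambda(\alpha) \circ \partial$ and hence gives $\Delta_*[X] = (i \wedge i)_* \lambda(\alpha)_*[S^n]$. Naturality of slant product produces $\phi/\Delta_*[X] = i_*\bigl(i^*\phi \,/\, \lambda(\alpha)_*[S^n]\bigr)$ for every $\phi \in \tilde H^k(X)$. The cofibration $S^{n-1} \to \S L \to X$ together with $\dim \S L \le n-2$ shows that $i^*\:\tilde H^k(X) \to \tilde H^k(\S L)$ and $i_*\:\tilde H_k(\S L) \to \tilde H_k(X)$ are isomorphisms for all $k \le n-1$. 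Hence the slant isomorphism criterion on $X$ in the range $1 \le k \le n-1$ is equivalent to the slant isomorphism criterion on $\S L$ in the same range; and since $\tilde H^*(\S L)$ and $\tilde H_{n-*}(\S L)$ both vanish outside $2 \le k \le n-2$, this is exactly the statement that $\lambda(\alpha)\: S^n \to \S L \wedge \S L$ is an $S$-duality map.

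The main (and essentially only) technical content is the formula $\Delta_*[X] = (i \wedge i)_* \lambda(\alpha)_*[S^n]$, which is delivered directly by \eqref{diagonal=Hopf-invariant}; the rest is bookkeeping driven by the connectivity of $i$. The hypothesis $\dim L \le n-3$ is tailored exactly for this bookkeeping: it forces $\tilde H_{n-1}(\S L) = \tilde H^{n-1}(\S L) = 0$ so that the slant-product isomorphism transfers cleanly between the two spaces.
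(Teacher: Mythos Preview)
Your proposal is correct and is exactly the argument the paper has in mind: the paper's entire proof is the sentence ``This immediately implies'' following diagram~\eqref{diagonal=Hopf-invariant}, and you have simply spelled out the homological bookkeeping behind that word ``immediately.'' Your identification $\Delta_*[X] = (i\wedge i)_*\lambda(\alpha)_*\partial_*[X]$ from the square, together with the observation that $i$ is a (co)homology isomorphism below degree $n{-}1$ (forced by $\dim \S L \le n{-}2$), is precisely the intended mechanism.
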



Suppose the cofiber of the based map $\alpha\:S^{n{-}1} \to K$ is a
Poincar\'e complex $X$ whose top cell splits off after one suspension,
so we have a degree one map
$\rho\:S^{n+1} \to \S X$.  Then 
$\S i\vee\rho\: \S K \vee S^{n+1} \to \S X$ is a homotopy equivalence
and defines a map
$f\:\S X \to \S K$ so that the composite $f\circ \S i\: \S K \to \S K$
is homotopic to the identity, and the composite
$f \circ \rho \: S^{n+1} \to \S K$ is nullhomotopic.

\begin{prop} 
\label{duality-criterion}
If $X$ is a Poincar\'e complex, the composite is an $S$-duality map:
$$
S^{n+2} @> \S \rho >>  \Sigma^2 X @> \lambda(f) >> \S K \smsh \S K
@> 1 - \tau_{\S K} >> \S K \smsh \S K .
$$ 
\end{prop}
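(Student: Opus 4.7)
The plan is to combine the symmetrization identity \eqref{symmetrize-Hopf-invariant} with the diagonal formula \eqref{diagonal=Hopf-invariant} to show that the composite in question equals $\S^2\lambda(\alpha)$, at which point Proposition~\ref{B-S-criterion} finishes the job. Applying \eqref{symmetrize-Hopf-invariant} to $f\:\S X\to\S K$ gives
$$(1-\tau_{\S K})\circ\lambda(f) \;=\; (f\^f)\circ\S^2\Delta_X \;-\; \S^2\Delta_K\circ\S f \;\in\; [\S^2 X,\S K\^\S K].$$
Precomposing with $\S\rho$ kills the second summand, since $\S f\circ\S\rho=\S(f\circ\rho)$ is nullhomotopic by construction of $f$. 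So it suffices to evaluate $(f\^f)\circ\S^2\Delta_X\circ\S\rho$.

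Applying \eqref{diagonal=Hopf-invariant} to $\alpha\:S^{n-1}\to K$ yields $\Delta_X\simeq(i\^i)\circ\lambda(\alpha)\circ\partial$, where $\partial\:X\to S^n$ is the Puppe boundary. Double-suspending and absorbing the standard shuffle $\S^2(K\^K)\cong\S K\^\S K$ gives
$$\S^2\Delta_X \;\simeq\; (\S i\^\S i)\circ\S^2\lambda(\alpha)\circ\S^2\partial.$$
Since $\rho$ splits the suspended boundary, $\S^2\partial\circ\S\rho=\S(\S\partial\circ\rho)=\id_{S^{n+2}}$; and since $f\circ\S i\simeq\id_{\S K}$, we have $(f\^f)\circ(\S i\^\S i)=(f\circ\S i)\^(f\circ\S i)=\id_{\S K\^\S K}$. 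Combining:
$$(1-\tau_{\S K})\circ\lambda(f)\circ\S\rho \;\simeq\; \S^2\lambda(\alpha)\: S^{n+2}\to\S K\^\S K.$$

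Finally, Proposition~\ref{B-S-criterion} tells us that the assumption that $X$ is a Poincar\'e complex forces $\lambda(\alpha)\:S^n\to K\^K$ to be an $S$-duality map. Because $S$-duality is a stable notion, its double suspension $\S^2\lambda(\alpha)$ is again an $S$-duality map, which is precisely the conclusion we want. The main point requiring attention is the bookkeeping of the shuffle isomorphisms implicit in \eqref{symmetrize-Hopf-invariant} and in the identification $\S^2(K\^K)\cong\S K\^\S K$; these are routine but must be tracked consistently. One must also take $K$ to be a suspension, as needed both for $\lambda(\alpha)$ to be defined in the Boardman-Steer sense and for Proposition~\ref{B-S-criterion} to apply; this is either tacit in the hypotheses or else can be secured by first suspending the entire cofibration.
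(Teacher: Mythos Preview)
Your argument and the paper's coincide through the reduction step: both apply \eqref{symmetrize-Hopf-invariant} to $f$ and precompose with $\S\rho$, reducing the claim to showing that $(f\^f)\circ\S^2\Delta_X\circ\S\rho$ is an $S$-duality map. They diverge at the point where one must factor $\Delta_X$ through $K\^K$. The paper invokes relative Poincar\'e duality directly: there is a map $\tilde\Delta\:X\to K\^K$ with $(i\^i)\circ\tilde\Delta\simeq\Delta_X$ and such that $\S\tilde\Delta\circ\rho$ is an $S$-duality; since $f\circ\S i\simeq\id$, one gets $(f\^f)\circ\S^2\Delta_X\circ\S\rho\simeq\S^2\tilde\Delta\circ\S\rho$ with no hypothesis on $K$. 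Your route via \eqref{diagonal=Hopf-invariant} instead yields the explicit identification with $\S^2\lambda(\alpha)$, which is pleasant and more informative when $K$ happens to be a suspension.

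The gap is that \eqref{diagonal=Hopf-invariant} and Proposition~\ref{B-S-criterion} both require $K=\S L$ so that $\lambda(\alpha)$ is even defined, whereas the proposition is stated (and immediately used in the proof of Theorem~\ref{period-thm}) for an arbitrary spine $K$. Your proposed repair of ``first suspending the entire cofibration'' does not close this: passing to $\S\alpha\:S^n\to\S K$ makes the cofiber $\S X$, which is no longer a Poincar\'e complex, so neither the hypothesis of the present proposition nor Proposition~\ref{B-S-criterion} applies to it; and applying \eqref{diagonal=Hopf-invariant} to $\S\alpha$ computes $\Delta_{\S X}$ rather than the $\S^2\Delta_X$ your argument actually uses. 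The missing ingredient for general $K$ is exactly the paper's relative duality map $\tilde\Delta$.
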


\begin{proof} 
Applying the symmetrization formula~\eqref{symmetrize-Hopf-invariant}
to $f\: \S X \to \S K$ gives
\begin{equation*}
\label{symmetrizer}
(f \^ f) \circ \S^2\Delta_X = (1 - \tau_{\S K}) \circ \lambda(f) +
\S^2\Delta_K \circ \S f 
\in [\S^2 X, \S K \^ \S K].
\end{equation*}
Since $f \circ \rho $ is nullhomotopic, right composition with 
$\S \rho$ gives 
\begin{equation*}
\label{symmetrizer2}
(f \^ f) \circ \S^2\Delta_X \circ \S \rho = 
(1 - \tau_{\S K}) \circ \lambda(f) \circ \S \rho 
\in [S^{n+2}, \S K \^ \S K].
\end{equation*}
Relative Poincar\'e duality is given by a map 
$\tilde{\Delta}\: X @>>> K \^ K$, so the composite
$\Sigma\tilde{\Delta}\circ \rho$ is an $S$-duality map.  But
$\Delta_X\: X \to X \^ X$ is homotopic to the composite
$\Delta_X\: X @>\tilde{\Delta}>> K \^ K @>i\^i>> X \^ X$.  Thus 
$(f\smsh f) \circ \Sigma^2\Delta_X$ is homotopic to
$\Sigma^2\tilde{\Delta}$.  Hence 
$(f\smsh f) \circ \Sigma^2\Delta_X \circ \S \rho$ is homotopic to 
$\Sigma^2\tilde{\Delta}\circ \S \rho$, which is an $S$-duality map.
\end{proof}


\begin{proof}[Proof of Theorem A]
As above, let $X^n = K \cup_{\alpha} D^n$ be a Poincar\'e complex
whose top cell splits off after one suspension by a degree one map
$\rho\: S^{n+1} \to \S X$.  Suspend twice the $S$-duality map of
Proposition~\ref{duality-criterion}.  By using
~\eqref{twist-and-suspend}, we see the composition
\begin{equation}
\label{wrong-symmetry}
S^{n+4} @> \Sigma^3 \rho >>  \Sigma^4 X @> \S^2\lambda(f) >> 
\S^2 K \smsh \S^2 K
@> 1 + \tau_{\S^2 K} >> \S^2 K \smsh \S^2 K
\end{equation}
is an $S$-duality map.  
Define $\beta\: S^{n+3} \to \S^2 K$ as the composition
$$
\beta\: S^{n+3} @> \S^2 \rho >>  \Sigma^3 X @> \S\lambda(f) >> 
\S(\S K \smsh \S K) @> [\iota, \iota]>> \S^2 K.
$$ 
Let $Y$ be the cofiber of $\beta$.
Lemma~\ref{Hopf-invariant-Whitehead-product} and naturality shows
$\lambda(\beta)$ is composition~\eqref{wrong-symmetry}. Therefore
$\lambda(\beta)$ an $S$-duality map.  By
Proposition~\ref{B-S-criterion}, $Y$ is an $(n+4)$-dimensional
Poincar\'e complex.  Clearly the top cell splits off $Y^{n+4}$ after
one suspension, because the suspension of a Whitehead product is
nullhomotopic.
\end{proof}

Note that in above proof we could have tried unsuccessfully to
construct an $(n+2)$-dimensional Poincar\'e complex as the cofiber $Z$
of the composition
$$
\gamma\: S^{n+1} @> \rho >>  \Sigma X @> H(f) >> 
\S(K \smsh K) @> [\iota, \iota]>> \S K.
$$ 
The composition formula~\cite[Thm.~3.16]{B-S} calculates
$\lambda(\gamma)$ to be the composition
$$
S^{n+2} @> \S\rho >>  \Sigma^2 X @> \lambda(f)>> 
\S K \^ \S K @> 1 + \tau_{\S K}>> \S K \^ \S K.
$$ 
Since we have $1+\tau_K$ instead of $1-\tau_K$, we don't know that
$\lambda(\beta)$ is an $S$-duality map, so we can't conclude that $Z$
is an $(n+2)$-dimensional Poincar\'e complex.

\section{Periodicity in the codimension one case \label{cod-one-case}}
In this section we will prove Theorem ~\ref{cod-one}, solving Question
1 for the class of Poincar\'e complexes having codimension one embeddings
in the sphere.
For the definition of Poincar\'e embedding, see e.g.~\cite{Klein}.
Let $X^n$ be a connected oriented $n$-dimensional Poincar\'e complex
which is Poincar\'e embedded in $S^{n+1}$. Theorem ~\ref{cod-one} is
a direct consequence of the following.

\begin{prop} 
If $L$ denotes the spine of $X$, then $\S L$ is the spine of a
Poincar\'e complex $Y$, and $Y$ has a codimension one Poincar\'e
embedding in $S^{n+3}$.
\end{prop}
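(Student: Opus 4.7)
The plan is to construct $Y$ together with its codimension-one Poincar\'e embedding in $S^{n+3}$ simultaneously, using the geometric decomposition of $S^{n+1}$ supplied by the codim-$1$ hypothesis on $X$. By definition of a codimension-one Poincar\'e embedding, we may write $S^{n+1} = A_1 \cup_X A_2$, where $(A_1, X)$ and $(A_2, X)$ are $(n{+}1)$-dimensional Poincar\'e pairs with common boundary $X$. The key input beyond the situation of Theorem~\ref{period-thm} is this Poincar\'e pair decomposition, which is the geometric data needed to lower the periodicity dimension from $n+4$ down to $n+2$.

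Next I would realize $S^{n+3}$ as an analogous union $B_1 \cup_Y B_2$ with $(B_i, Y)$ Poincar\'e pairs of dimension $n+3$, via a thickened join construction: embed $S^{n+1} \hookrightarrow S^{n+3}$ by $S^{n+3} = S^{n+1} * S^1$, and use the pair $(A_i, X)$ to carve out a pair $(B_i, Y)$ in $S^{n+3}$ whose boundary is a common closed complex $Y$ of dimension $n+2$. This simultaneously realizes $Y$ as a Poincar\'e complex (being the common boundary of Poincar\'e pairs summing to a sphere) and exhibits the codimension-one Poincar\'e embedding of $Y$ in $S^{n+3}$.

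The remaining task is to identify the spine of $Y$ as $\S L$. For this I would exhibit an attaching map $\beta\colon S^{n+1} \to \S L$ for the top cell of $Y$ and compute its Hopf invariant $\lambda(\beta)$. Proposition~\ref{duality-criterion} already supplies an explicit $S$-duality map
\[
(1 - \tau_{\S L}) \circ \lambda(f) \circ \S \rho \colon S^{n+2} \longrightarrow \S L \smsh \S L,
\]
where $f\colon \S X \to \S L$ and $\rho\colon S^{n+1} \to \S X$ are the data from the top-cell splitting of $\S X$ provided by the codim-$1$ embedding. If $\lambda(\beta)$ agrees with this $S$-duality map, then Proposition~\ref{B-S-criterion} immediately identifies $Y$ as a Poincar\'e complex with spine $\S L$, completing the proof.

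I expect the hardest step to be achieving the antisymmetric Hopf invariant $(1 - \tau_{\S L})\,\lambda(f)\,\S\rho$ \emph{directly}, without an extra suspension. Lemma~\ref{Hopf-invariant-Whitehead-product} shows that a bare Whitehead product yields only the symmetric $1 + \tau_{\S L}$ version, and the proof of Theorem~\ref{period-thm} converts this to $1 - \tau$ by suspending once more, so that the twist on $S^1 \smsh S^1$ contributes an extra sign of $-1$. Since we want $Y$ in dimension $n+2$ rather than $n+4$, no such extra suspension is available, and the missing sign must come from the geometry: the Poincar\'e pair decomposition $(A_1, X)$, $(A_2, X)$ supplies two oppositely oriented halves of $X$ inside $S^{n+1}$, and incorporating both of them into the construction of $Y$ should produce the required antisymmetric Hopf invariant.
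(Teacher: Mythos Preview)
Your proposal has two real gaps. First, the ``thickened join'' construction is never specified: you say you would ``carve out a pair $(B_i, Y)$'' from $S^{n+1} * S^1$ but give no recipe, so at this point there is no $Y$ to analyze. Second, you correctly locate the obstacle in the Hopf-invariant approach---obtaining $(1-\tau_{\S L})$ without an extra suspension---but leave its resolution as ``should produce the required antisymmetric Hopf invariant,'' which is a hope rather than an argument.

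The paper's proof avoids Hopf invariants entirely. Writing the complement of $X \subset S^{n+1}$ as $M \amalg W$ (your $A_1$, $A_2$), it applies the \emph{decompression} (fiberwise suspension) construction of \cite{Klein} twice: the fiberwise suspension $S_M X$ gives a Poincar\'e embedding of $M$ in $S^{n+2}$ with complement $\S W$; reversing roles and decompressing again yields $Y := S_{\S W} S_M X$ Poincar\'e-embedded in $S^{n+3}$ with complement pieces $\S M$ and $\S W$. A Mayer--Vietoris check then shows the spine of $Y$ is $\S M \vee \S W \simeq \S L$. The paper also sketches a Hopf-invariant alternative, and here is the idea you were reaching for: do not attach the top cell to $\S L$ directly, but to the wedge $\S M \vee \S W$ via the composite $S^{n+1} \xrightarrow{\mathcal D} \S(M\smsh W) \xrightarrow{[y,x]} \S M \vee \S W$, where $\mathcal D$ is the Alexander $S$-duality map. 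The Hopf invariant $(1+\tau)\circ(y\smsh x)\circ \S\mathcal D$ lands in the off-diagonal summands of $(\S M \vee \S W)^{\smsh 2}$, where $1+\tau$ merely interchanges summands and there is nothing to symmetrize; it is an $S$-duality map because $\mathcal D$ is. So the two complement pieces do solve the problem, but by splitting the \emph{target} of the attaching map, not by contributing a sign.
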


\begin{proof} 
The proof will rely on the decompression construction of
\cite[sec.~2.3]{Klein}.  By Spanier-Whitehead duality, the complement
of $X \subset S^{n+1}$ has two components, call them $M$ and $W$. The
normal data define inclusions
$X \to M$ and $X\to W$ which form Poincar\'e pairs of dimension $n+1$.
Then we have a homotopy pushout diagram
\begin{equation}
\label{embed-M-in-n+1}
\begin{CD}
X @>>> W
\\
@VVV @VVV
\\
M @>>> S^{n+1}
\end{CD}
\end{equation}
which gives a Poincar\'e embedding of $M$ with complement $W$.

The fiberwise suspension $S_M X$ of $X$ over $M$ is the double mapping
cylinder
$
S_M X = M \times 0 \cup X \times [0,1] \cup M \times 1 ,
$
together with the evident map $S_M X \to M$ (\cite[p.~609]{Klein}). Note that
$M \times 0$ provides a section $M \to S_M X$, and the map
$X \to W$ induces a map $S_M X \to \Sigma W$ given by collapsing each
copy of $M$ to a point.  Then the homotopy pushout diagram
$$
\begin{CD}
S_M X @>>> \Sigma W
\\
@VVV @VVV
\\
M @>>> S^{n+2}
\end{CD}
$$
is a Poincar\'e embedding of $M$ in $S^{n+2}$ with complement 
$\Sigma W$.  This is the {\em decompression} of $M$ in $S^{n+2}$,
which is well understood if $M$ is a closed submanifold of $S^{n+1}$,
and $X$ is the sphere bundle of the normal bundle.  Reversing the
roles of $M$ and
$\Sigma W$, we decompress once more to get a Poincar\'e embedding
$$
\begin{CD}
S_{\Sigma W} S_M X @>>> \Sigma W
\\
@VVV @VVV
\\
\Sigma M @>>> S^{n+3}\, .
\end{CD} 
$$
Set $Y = S_{\Sigma W} S_M X$, and note that the maps $Y\to \Sigma M$ 
and $Y \to \Sigma W$ have sections. The sum of these gives a map 
$\Sigma M \vee \Sigma W \to Y$ which is seen to be $(n+1)$-connected 
by application of the Mayer-Vietoris sequence to the diagram. The relative
Hurewicz theorem shows that $Y$ is obtained from $\Sigma M \vee \Sigma W$
by attaching an $(n+2)$-cell. So $\Sigma M \vee \Sigma W = \Sigma (M \vee W)$ is the spine of  $Y$. 
\end{proof}

For a harder proof similar to the proof of Theorem ~\ref{period-thm},
define
$Y$ as the cofiber of the composite
$S^{n+1} @>{\mathcal D}>> \S M \^ W @>[y, x]>> \S M \vee \S W$ (using
the $S$-duality map ${\mathcal D}$ of Lemma~\ref{non-embed} below),
whose Hopf invariant is an $S$-duality map by
Lemma~\ref{Hopf-invariant-Whitehead-product}. $Y$ is a Poincar\'e
complex by Proposition~\ref{B-S-criterion}.  There are obvious maps $Y
\to \S M$ and $Y \to \S W$, which one can show determines a Poincar\'e
embedding in $S^{n+2}$.

\section{\label{spherical-fibration-proof} Proof of Propositions ~\ref{spherical-fibration} and ~\ref{Kervaire-example}}


\begin{lem} 
\label{non-embed}  
Let $A = (S^p \vee S^q) \cup_\alpha D^{p+q}$ be a three cell complex
satisfying Poincar\'e duality, where $p,q > 1$.  If $A$ has a
Poincar\'e embedding in $S^{p+q+1}$, then there is a homotopy
equivalence
$A\simeq S^p \times S^q$.
\end{lem}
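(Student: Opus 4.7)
The plan is to use the codimension one embedding to present $S^{p+q+1}$ as a homotopy pushout along $A$, identify the two ``halves'' of the complement as spheres, and then compare $A$ directly to $S^p\times S^q$ via a product map.

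By hypothesis the Poincar\'e embedding yields Poincar\'e pairs $(M,A)$ and $(W,A)$ of dimension $p+q+1$ with $S^{p+q+1}\simeq M\cup_A W$.  Since $p,q>1$, the complex $A$ is simply connected, so Seifert--van Kampen applied to the pushout forces $\pi_1(M)=\pi_1(W)=1$.  The Mayer--Vietoris sequence, combined with $\tilde H_*(A)=\mathbb{Z}$ in degrees $p$, $q$, $p+q$ and the fact that the connecting map $H_{p+q+1}(S^{p+q+1})\to H_{p+q}(A)$ carries fundamental class to fundamental class, yields $H_p(M)\oplus H_p(W)\cong H_p(A)$, $H_q(M)\oplus H_q(W)\cong H_q(A)$, and $H_{p+q}(M)=H_{p+q}(W)=0$.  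After relabeling, $\tilde H_*(M)$ is concentrated in degree $p$ and $\tilde H_*(W)$ in degree $q$; the alternative of one component being acyclic is ruled out, since the Poincar\'e pair structure would then exhibit $A$ as an $(p{+}q)$-sphere, contradicting its three-cell description.  Hurewicz and Whitehead then give $M\simeq S^p$ and $W\simeq S^q$.

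The two pair inclusions assemble into a map $\phi\: A \to M\times W \simeq S^p\times S^q$, which by construction is an isomorphism on $H_p$ and $H_q$.  For the top degree I would argue that $\phi^*$ sends the K\"unneth generator of $H^{p+q}(S^p\times S^q)$ to the cup product of the two pulled-back degree-$p$ and degree-$q$ classes; unimodularity of the cup product pairing on the Poincar\'e complex $A$ then forces this to be $\pm 1$ times a generator of $H^{p+q}(A)$.  As both spaces are simply connected CW complexes, Whitehead's theorem upgrades $\phi$ to a homotopy equivalence.

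The main obstacle I anticipate is the case $p=q$, where the intersection form on $H_p(A)\cong \mathbb{Z}^2$ is a priori an arbitrary unimodular form of the correct symmetry, whereas the form on $S^p\times S^p$ is hyperbolic.  I would handle this by the classical linking argument: two classes in $H_p(A)$ that bound on the same side of $A$ in $S^{2p+1}$ can be pushed off one another into that side and so have zero intersection.  In the Mayer--Vietoris basis $\{e_M,e_W\}$ (with $e_M$ surviving in $M$ and bounding in $W$, and symmetrically for $e_W$) this forces the form to be hyperbolic, making the cup-product calculation of the previous paragraph go through uniformly and yielding $A\simeq S^p\times S^q$.
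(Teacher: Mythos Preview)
Your argument is correct and follows the same overall strategy as the paper (identify the two complement pieces as spheres, then show the product map $A\to M\times W$ is an equivalence), but the execution of the two key steps is genuinely different.

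For the identification $M\simeq S^p$, $W\simeq S^q$, the paper invokes a stable splitting $\S A\simeq \S M\vee \S W\vee S^{p+q+1}$ from \cite{Richter:Williams(1992)} to see that the restriction $A_0=S^p\vee S^q\to M\vee W$ is an equivalence, and then sorts out the homology via Alexander duality.  You instead use Mayer--Vietoris and the Poincar\'e pair structure directly; this is more elementary and self-contained.

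For the top degree, the paper shows $F=(f,g)$ has degree one by observing that the composite
\[
S^{p+q+1}\xrightarrow{\text{collapse}}\S A\xrightarrow{\S\Delta}\S A\smsh A\to \S M\smsh W\simeq S^{p+q+1}
\]
is the $S$-duality map inducing Alexander duality, hence a homotopy equivalence.  This works uniformly in $p,q$.  Your route via unimodularity is fine for $p\ne q$ but, as you correctly diagnose, needs the additional fact that the intersection form is hyperbolic in the Mayer--Vietoris basis when $p=q$.  Your ``push-off'' justification is geometric shorthand; in the Poincar\'e category the precise statement is that for the Poincar\'e pair $(W,A)$ the kernel of $H_p(A)\to H_p(W)$ is isotropic (Poincar\'e--Lefschetz duality gives $\ker i_*=\operatorname{im}\partial$ corresponding to $\operatorname{im} i^*$ under $PD_A$, whence $x\cdot y=\langle i^*\alpha,y\rangle=\langle\alpha,i_*y\rangle=0$).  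With that in hand your argument goes through.

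In short: your proof trades the paper's citation of the stable splitting and the $S$-duality degree computation for direct Mayer--Vietoris and an intersection-form argument, at the cost of having to handle the $p=q$ case separately.
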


\begin{proof} 
Given a codimension one Poincar\'e embedding of $A$, its complement
has two components $M$ and $W$. We have a homotopy pushout
and a stable splitting ~\cite{Richter:Williams(1992)}
$$
\begin{CD}
A @>g >> W
\\
@Vf VV @VVV 
\\
M  @>>> S^{p+q+1}
\end{CD}
\qquad\qquad
\S A @>\S f + \S g + \S h>\simeq> 
\S M \vee \S W \vee S^{p+q+1},
$$
where $h\: A \to S^{p+q}$ is the pinch onto the top cell.  By the van
Kampen theorem, $M$ and $W$ are $1$-connected.  We will show that $M$
and $W$ are homotopy equivalent to the spheres $S^p$ and $S^q$, and
that the map $F = f \times g\: A \to M \times W$ is a homotopy
equivalence.  Let $A_0 = S^p \vee S^q$.  Since $A_0$ is a co-H space,
the restriction of $F$ to $A_0$ factors up to homotopy through the
wedge by a map $F_0 = x f + y g\: A_0 \to M \vee W$.  By the stable
splitting, $F_0$ is a homotopy equivalence by the Whitehead theorem.  

Assume $p \ne q$.  Since
$\Bbb Z \cong H_p(A_0) @>F_\ast>\cong> H_p(M) \oplus H_p(W)$, one of
the summands is 0.  Assume $H_p(M) \cong \Bbb Z$ and
$H_p(W) = 0$.  By Alexander duality, $H_q(M) = 0$ and 
$H_q(W) \cong \Bbb Z$.  Since $F_*$ is an isomorphism, all other
reduced homology groups of $M$ and $W$ vanish.  Thus we have homotopy
equivalences $M \simeq S^p$ and $W \simeq S^q$.

Assume $p = q$.  Now 
$\Bbb Z \oplus \Bbb Z \cong H_p(A_0) \cong H_p(M) \oplus H_p(W)$, and
all other reduced homology groups of $M$ and $W$ vanish.   By Alexander
duality, neither $M$ nor $W$ is contractible, so we have 
$\Bbb Z \cong H_p(M) \cong H_p(W)$, and again 
$M \simeq S^p$ and $W \simeq S^q$.

Thus we have shown that $F$ is a homology isomorphism except in degree
$p+q$, where $F$ is degree one because (cf.~\cite[Prop.~2.3]{Klein2},
~\cite[\Sec2]{Richter:Williams(1992)}) Alexander duality is induced by
the $S$-duality map
$$
{\mathcal D}\:
S^{p+q+1} @>\text{collapse}>> \S A @>\S \Delta>> \S A \smsh A 
@>>> \S M \smsh W \simeq S^{p+q+1} .
$$
Therefore, composing $F$ with the homotopy equivalences $M \simeq S^p$
and
$W \simeq S^q$ gives a homotopy equivalence
$A \to S^q \times S^p$.
\end{proof}

\begin{proof}[Proof of Proposition \protect{\ref{spherical-fibration}}]
$A$ is a Poincar\'e complex of dimension $p+q$ since the cup product
structure on $A$ is determined by the term $[y,x]$ appearing in the
attaching map, i.e., the cohomology ring of $A$ is just the cohomology
ring of $S^p \times S^q$.

By Lemma ~\ref{non-embed}, it suffices to show there is no map
$A \to S^q$ which has degree one on the $q$-cell.  Assume that such a
map exists.  By the  cofibration sequence
$$
S^{p+q-1} @>[y, x] + y P(\alpha)>> S^p \vee S^q @>>> A,
$$
there must be a map $f\: S^p \to S^q$ so the following composite is
nullhomotopic:
\begin{equation}
\label{nullhomotopic-if-embed}
S^{p+q-1} @>[y, x] + y P(\alpha)>> S^p \vee S^q @>f \vee 1>> S^q.
\end{equation}
This composite is 
$[\iota, f] + P(\alpha) \in \pi_{p+q-1} (S^q)$, by naturality of
Whitehead products.  But $[\iota, f]$ is homotopic to
the composite 
$S^{p+q-1} @>\Sigma^{q-1} f>> S^{2q-1} @>[\iota,\iota]>> S^q$, by the
Barcus-Barratt theorem~\cite{Barcus-Barratt(58)} and the
fact~\cite{Cohen:Seattle-course} that at the prime 2, all higher
Whitehead products vanish.  Now $[\iota,\iota]\: S^{2q-1} \to S^q$
is homotopic~\cite{Whitehead:book} to the composite
$P\circ E^2\: S^{2q-1} \to S^q$.  Thus 
$[\iota, f] = P (\Sigma^{q+1} f) \in \pi_{p+q-1}(S^q)$.  Since our
composite~\eqref{nullhomotopic-if-embed} is nullhomotopic, we have
$P (\Sigma^{q+1} f + \alpha) = 0 \in \pi_{p+q-1} (S^q)$.  By the
exactness of the EHP sequence,
$\Sigma^{q+1} f + \alpha = H(\beta) \in \pi_{p+q-1} (S^{2q+1})$ for
some $\beta\in \pi_{p+q-1} (S^{q+1})$.  By James's theorem
(cf.~\cite{Cohen:Seattle-course}), $2 H(\beta) = 0$, since $q$ is
even, so
$$
2 \alpha + 2 E^{q+1} f  = 0 \in \pi_{p+q+1} (S^{2q+1}).
$$
Multiplying this equation above by $2^{r-1}$ gives the contradiction
$2^r \alpha = 0$.
\end{proof}


\begin{proof}[Proof of Proposition \protect{\ref{Kervaire-example}}]
The top cell of $A := A_n$ falls off after one suspension since the
attaching map is a sum of Whitehead products.  $A$ is a
$2n$-dimensional Poincar\'e complex since the cup product structure on
$A$ is determined by the term $[y,x]$ appearing in the attaching map,
i.e.,
$H^*(A) \cong H^*(S^n \times S^n)$.

Assume $A$ has a Poincar\'e embedding in $S^{2n+1}$.  By Lemma
~\ref{non-embed}, there is a homotopy equivalence
$A \simeq S^n \times S^n$.  The projection of this equivalence onto
the first factor is a map $f\: A \to S^n$ such that $f^*(x)$ extends
to a basis of $H^n(A)$, where $x\in H^n(S^n)$ is a generator.  Thus
the restriction of $f$ to the $n$-skeleton $A_0$ is a map of the form
$a\iota \vee b\iota\:S^n \vee S^n \to S^n$, with $a$ and $b$
relatively prime.  By the cofibration sequence defining $A$, the
composite
$$
S^{2n-1} @>[x, x] + [y, x] + [y, y]>> S^n \vee S^n 
@>a\iota \vee  b\iota>> S^n 
$$
is nullhomotopic.  By naturality of Whitehead products,  this
composite is
$$
[a\iota, a\iota] + [b\iota, a\iota] + [b\iota, b\iota] = 
[\iota, \iota] a^2+ [\iota, \iota] ab + [\iota, \iota] b^2
= [\iota, \iota] (a+ab+b),
$$
using the left-distributivity of composition.  Since $n$ is odd, 
$2 [\iota,\iota] = 0$ (cf.~\cite{Cohen:Seattle-course}), but since 
$n \ne 1,3,7$, the Hopf invariant one
theorem~\cite{Hopf-invariant-one} implies that
$[\iota,\iota] \ne 0$.  Hence $[\iota,\iota]$ has order 2.
Thus
$
a + ab + b \equiv 0 \pmod 2
$, so 
$(a+1)(b+1) \equiv 1 \pmod 2$.  Thus both $a$ and $b$ are even.  But
$a$ and $b$ are relatively prime, so we have a contradiction.  Hence
$A$ does not embed in codimension one.
\end{proof}

\section{ Proof of Theorem ~\ref{barratt-mahowald} and Corollary ~\ref{families}}
\label{barratt-mahowald-proof}

The cofiber of a map $\gamma\: S^{u-1} \to S^v$ is called the {\em
Moore space} $M^u_\gamma$, and the cofiber $Y \cup_g CX$ of a map
$g\: X \to Y$ will also be written $M_g$.  The generator of
$\pi_7^s \cong \Bbb Z/16$ is 
$\sigma \in \pi_{15}(S^8)$, and~\cite[Prop.~5.15]{Toda} 
$2 \sigma \in \pi_7^s$ desuspends to the generator 
$\sigma' \in \pi_{14}(S^7) \cong \Bbb Z/8$.  Let
$\tau\: \S W \to Z$ belong to the Toda bracket
$\{h, g, f\}$ of the sequence
$W @>f>> X @>g>> Y @>h>> Z$.
$\tau$ is defined as a composite 
$SW @>f_\flat>> M_g @>h^\sharp>> Z$ defined using nullhomotopies of
$gf$ and $hg$.  The diagram 
\begin{equation}
\label{Adams-diagram}
\begin{CD}
M_f @>j>> \S W
\\
@V g^\sharp VV @VV \tau V 
\\
Y @>h>> Z
\end{CD}
\end{equation}
is homotopy commutative (cf.~\cite[Diag.~(5.1)]{Adams}), where
$g^\sharp\: M_f @>>> Y$ is defined using the same nullhomotopy of
$gf$.  Suppose the Toda bracket
$\{h, g, f\}$ contains $0$.  Then some
$\tau = h^\sharp\circ f_\flat$ is nullhomotopic.  By
diagram~\eqref{Adams-diagram}, the composite
$M_f @>g^\sharp>> Y @>h>> Z$ is nullhomotopic, and so defines a map
$V = (g^\sharp)_\flat\: \S M_f @>>> M_h$.  Thus the composite
$
\S X @>\S i>> \S M_f @>V>> M_h @>j>> \S Y
$ is homotopic to $\S g$.  
We now have


\begin{proof}[Proof of \protect{\ref{Adams-self-map}}]
Toda~\cite[Cor.~3.7]{Toda} shows the Toda bracket
$\{8\iota, E\sigma', 8\iota\} $ of the sequence
$
S^8 @< 8\iota << S^8 @< E\sigma' << S^{15} @< 8\iota << S^{15}
$
contains 0.  Thus we have a map
$M^{17}_{8 \iota}  @>V>>   M^9_{8 \iota}$
so that composite $j \circ V \circ i$ is homotopic to $E^2\sigma'$, which
is homotopic to $2 \sigma \in \pi_{16} (S^9)$.
The other maps with $n > 9$ are obtained by suspending $V$.
\end{proof}

\begin{proof}[Proof of Theorem \protect{\ref{barratt-mahowald}}]
Recall~\cite[Prop.~5.6]{Toda} that $\nu \in \pi_7(S^4)$ is the
generator of $\pi_8(S^5) \cong \pi_3^s \cong \Bbb Z/8$.
Adams~\cite[Lem.~12.5]{Adams} shows that $V$ is a $K$-theory
isomorphism, and
$e_{\Bbb C}(\nu) = a/4$ for some odd integer $a$, where
$e_{\Bbb C}$ the complex $e$-invariant (See~\cite[Prop.~7.14,
Ex.~7.17]{Adams})
The proof of ~\cite[Thm.~12.3]{Adams} shows that
$e_{\Bbb C}(N_k) = b/4$, where $b$ is odd.  Then 
$e_{\Bbb R}(N_k) = b/8$, where $e_{\Bbb R}$ is the real $e$-invariant defined
by suspending $N_k$ to have target sphere $S^8$.  Hence $N_k$ has
order at least 8.  The Adams self-map construction shows $8 N_k = 0$,
so $N_k$ has order 8.  

Adams~\cite[Ex.~7.17]{Adams} shows that $e_{\Bbb C}(\sigma) = r/16$
for some odd integer $r$.  Hence
$e_{\Bbb C}(S_k) = s/16$, for some odd integer $s$, and
$S_k$ has order at least 16.  The Adams self-map
construction shows $16 S_k = 0$, so $S_k$ has order 16. 
\end{proof}

\def\Z{{   \mathbb Z }}

We now give a longer proof of Theorem ~\ref{barratt-mahowald} which we
hope is more comprehensible.  
For the $S_k$ family, we require only that the Adams self-map is a
$K$-theory isomorphism, a fact that we believe was known to Barratt
prior to ~\cite{Adams}.

Recall the Toda bracket $\{h, g, f\}$ of a sequence
$W @>f>> X @>g>> Y @>h>> Z$.  Any map $\tau\: \S W \to Z$ making
diagram~\eqref{Adams-diagram} homotopy commute must belong to
$\{h, g, f\}$: we know some element of the Toda bracket
$\tau_0 \in \{h, g, f\}$ makes diagram~\eqref{Adams-diagram} homotopy
commute, and by exactness of the cofibration sequence
$M_f \to \S W @>\S f>> \S X$, we know that 
$\tau= \tau_0 + p\circ \S f \in [\S W, Z]$, for some element 
$p \in [\S X, Z]$.  But $p\circ \S f \in [\S W, Z]$ belongs to the
indeterminacy of $\{h, g, f\}$.  Hence $\tau \in \{h, g, f\}$.

By Bott periodicity, $\pi_{2n}(BU) \cong \Z$ and
$\pi_{2n-1}(BU) = 0$, for $n > 0$.  The generator 
$\zeta_n\: S^{2n} @>>> BU$ is the $n$-fold exterior power of the
bottom generator $\zeta_1 \in BU$, which comes of course from 
$S^2 = \Bbb CP^1 \subset \Bbb CP^\infty = BU(1) \subset BU$.  

Given $f \in \pi_{2m-1} S^{2k}$ with $q f = 0$, consider the
Toda bracket $\{\zeta_k, f, q \iota\} \subset \Z$ of the sequence
$ BU @<\zeta_k<< S^{2k} @<f<< S^{2n-1} @<q \iota<< S^{2n-1}
$.
The indeterminacy is $q \pi_{2n}(BU) = q\Z \subset \Z$, as $f$ has
finite order.   We often mod out by the indeterminacy and write 
$\{\zeta_k, f, q\iota\} \in \Z/q$.  We can use these Toda brackets to
establish the order of an element.  Suppose e.g.~that $16 f = 0$, and
we show that $\{\zeta_k, f, 16\iota\} \in \Z/16$ has order 16.  Then
we can show $f$ must have order 16.  By a Toda bracket identity, the
image of
$\{\zeta_k, f, 16\iota\} \in \Z/16$ under the projection $\Z/16 \to
\Z/2$ is $1 = \{\zeta_k, f 8\iota, 2\iota\} \in \Z/2$.  Hence 
$8 f = f 8\iota$ must be nonzero.

By the above, $r \in \{\zeta_k, f, q\}$ iff the diagram homotopy
commutes:
\begin{equation}
\label{delta-Moore-sharp}
\begin{CD}
M_{q\iota}^{2n} @>j>>  S^{2n}
\\
@V f^\sharp VV   @VV r \zeta_n V
\\
S^{2k} @>\zeta_k>> BU
\end{CD}
\end{equation}
Adams defined the complex Adams operation
$\Psi_2\: BU \to BU$, which defines a ring homomorphism in
$\tilde{KU}(X) = [X, BU]$ satisfying two properties: 
$\Psi_2(\zeta_n) = 2^n \zeta_n \in \pi_{2n}(BU)$, and 
$\Psi_2(x) = x \cup x \in KU(X) \pmod 2$, for any class
$x \in KU(X)$.    
Then $(\Psi_2 - 2^k) \zeta_k = 0$, and we have the Toda bracket
identity 
\begin{equation}
\label{delta-e-invariants}
(\Psi_2 - 2^k) \{\zeta_k, f, q\} =  \{(\Psi_2 - 2^k), \zeta_k, f\}  q
\in \pi_{2n} BU \cong \Z.
\end{equation}
The Toda bracket $\{(\Psi_2 - 2^k), \zeta_k, f\}$ is essentially
Adams's complex $e$-invariant, and it has indeterminacy $2^k$ times an
odd number ($2^{n-k} - 1$, in fact).  It easily follows from
~\cite{Adams}, or the properties above, that 
$\{(\Psi_2 - 2^k), \zeta_k, \nu\}$ is $2^{k-2}$ times an odd number,
for $k \ge 2$, and that$\{(\Psi_2 - 2^k), \zeta_k, \sigma\}$ is
$2^{k-4}$ times an odd number, for $k \ge 4$.  Then it follows from
~\eqref{delta-e-invariants} that 
 
\begin{lem}
\label{delta-invariant-nu-sigma}
The Toda bracket $\{\zeta_k, \sigma, 16\} \in \Z/16$ of the sequence
$$
BU @<\zeta_k<< S^{2k} @<\sigma<< S^{2k+7}  @<16<< S^{2k+7} 
\qquad\qquad\text{has order 16.}
$$

The Toda bracket $\{\zeta_k, \nu, 16\} \in \Z/8$ of the sequence
$$
BU @<\zeta_k<< S^{2k} @<\nu<< S^{2k+3}  @<8<< S^{2k+3} 
\qquad\qquad\text{has order 4.}
$$
\end{lem}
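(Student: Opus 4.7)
The plan is to apply the Toda bracket identity~\eqref{delta-e-invariants} with the $e$-invariant estimates just recalled, and conclude by arithmetic in $\Z/q$. First I would pin down the ambient group: for any $f$ with $qf=0$, the indeterminacy of $\{\zeta_k,f,q\iota\}\subset\pi_{2n}(BU)$ is $q\cdot\pi_{2n}(BU)+\zeta_k\circ\pi_{2n}(S^{2k})=q\Z$, because $\pi_{2n}(S^{2k})$ is torsion in the range of interest while $\pi_{2n}(BU)\cong\Z$ is torsion free; so the bracket lives in $\Z/q$ as asserted. I would also record that $\Psi_2$ acts on $\pi_{2n}(BU)$ by multiplication by $2^n$, so $\Psi_2-2^k$ acts as $2^k(2^{n-k}-1)$.

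For the $\sigma$ case, $n=k+4$ and $q=16$, so $\Psi_2-2^k$ acts on $\pi_{2k+8}(BU)$ as $15\cdot 2^k$ and the indeterminacy of $\{(\Psi_2-2^k),\zeta_k,\sigma\}$ is $15\cdot 2^k\Z$. Choose a representative $x\in\{\zeta_k,\sigma,16\iota\}$. Treating~\eqref{delta-e-invariants} as an equality of cosets in $\Z$ and cancelling the common indeterminacy $15\cdot 2^{k+4}\Z$ gives $15\cdot 2^k\cdot x\equiv 16A\pmod{15\cdot 2^{k+4}}$, where $A=2^{k-4}a$ with $a$ odd. Dividing by $2^k$ yields $15x\equiv a\pmod{240}$, so $15x$ is odd and hence $x$ is odd. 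Thus $\{\zeta_k,\sigma,16\iota\}$ is a unit in $\Z/16$ of order $16$. The parallel manipulation for $\nu$, with $n=k+2$, $q=8$, and $A=2^{k-2}a$ ($a$ odd), produces $3x\equiv 2a\pmod{24}$; reducing modulo $8$ and multiplying by $3^{-1}\equiv 3\pmod 8$ gives $x\equiv -2a\pmod 8$, which is twice an odd integer, so the class of $x$ in $\Z/8$ lies in $\{2,6\}$ and has order $4$.

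The bookkeeping I expect to need the most care is matching indeterminacies on the two sides of~\eqref{delta-e-invariants}: both sides live modulo $q\cdot 2^k(2^{n-k}-1)\Z$, so after cancellation the two congruences displayed above are unambiguous. With that settled, the rest is the one-line arithmetic shown, and neither case requires any input about the elements beyond the $2$-adic valuation of their complex $e$-invariants.
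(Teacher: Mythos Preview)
Your argument is correct and follows the same route as the paper: apply the identity~\eqref{delta-e-invariants}, use that $\Psi_2-2^k$ acts on $\pi_{2n}(BU)$ as multiplication by $2^k(2^{n-k}-1)$, and read off the $2$-adic valuation of a representative from the known valuation of the $e$-invariant. Your bookkeeping on the indeterminacies is more careful than the paper's (which tacitly works $2$-locally and writes the indeterminacy as $2^{k+4}$), and you actually carry out the $\nu$ case rather than dismissing it as ``similar''; in particular your verification that $x\equiv 2$ or $6\pmod 8$ pins down the order as exactly $4$, which is what the lemma asserts.
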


\begin{proof}
Choose $r \zeta_{k+4} \in \{\zeta_k, \sigma, 16\}$.  Then for $a$, $b$
odd,
$$
2^k a r \zeta_{k+4} = (\Psi_2 - 2^k) r \zeta_{k+4}
=
\{(\Psi_2 - 2^k), \zeta_k, \sigma\}  16
=
2^{k-4} b 16 \zeta_{k+4}
$$
modulo the indeterminacy $2^{k+4}$, so $r$ is odd.  $\nu$ is handled
similarly.
\end{proof}

Thus the composite (recall that $j\circ W = \sigma^\sharp$)
$$
M^{2k+8}_{16 \iota} @>W>> M^{2k}_{16\iota} @>j>> S^{2k} @>\zeta_k>> BU
$$
is homotopic to 
$M^{2k+8}_{16 \iota} @>j>> S^{2k+8} @>r \zeta_{k+4}>> BU$, where $r$
is odd.  This is what is meant by saying that $W$ is a (2-local)
$K$-theory isomorphism.  Now suspend $S_k$ 7 times:
$$
S^{8k+7} @> i >> M^{8k+8}_{16 \iota} @> W^{\circ (k-2)} >> 
M^{24}_{16 \iota} @> \sigma^\sharp >> S^{16}.
$$
By induction, the composite
$
M^{8k+8}_{16 \iota} @> W^{\circ (k-2)} >> 
M^{24}_{16 \iota} @> \sigma^\sharp >> S^{16} 
@>\zeta_8>> BU 
$
is homotopic to an odd multiple of the generator 
$M^{8k+8}_{16 \iota} @>j>> S^{8k+8} @>\zeta_{4k+k}>> BU$.  By
~\eqref{delta-Moore-sharp}, $\{\zeta_8, S_k, 16\iota\} \in \Z/16$ is
odd, and hence of order 16.  Thus
$\{\zeta_8, S_k 8\iota, 2\iota\} = 1\in \Z/2$, and we have proved that
$S_k$ has order 16.

The case of $N_k$ is similar, but harder.  By
Lemma~\ref{delta-invariant-nu-sigma} and ~\eqref{delta-Moore-sharp},
the composite
$M^{12}_{8 \iota} @>\nu^\sharp >> S^8 @>\zeta_4>> BU$ is homotopic to
twice an odd multiple of 
$M^{12}_{8 \iota} @>j>> S^{12} @>\zeta_6>> BU$.
Suspend $N_k$ 3 times:
$$
S^{8k+3} @> i >> M^{8k+4}_{8 \iota} @> V^{\circ{(k-1)}} >> 
M^{12}_{8 \iota} @>\nu^\sharp >> S^8.
$$
The composite 
$
M^{8k+4}_{8 \iota} @> V^{\circ{(k-1)}} >> 
M^{12}_{8 \iota} @>\nu^\sharp >> S^8 @>\zeta_4>> BU$ is homotopic to
twice an odd multiple of 
$
M^{8k+4}_{8 \iota} @>j>> S^{8k+4} @>\zeta_{4k+2}>> BU
$
and we conclude that $N_k$ has order at least 4, which is good enough
for our Poincar\'e embedding results.


\subsection*{Proof of Corollary ~\ref{families}} 
For the Poincar\'e complexes defined using $N_k$, we consider the
image of suspension homomorphism $E\:\pi_p(S^2) \to
\pi_{p+1}(S^3)$. The composite
$$
\pi_p(S^3) @> \eta_\ast >\cong > \pi_p(S^2) @> E >> \pi_{p+1}(S^3)
$$ 
coincides with $x \mapsto E\eta \cdot Ex$ and $E\eta$ has order
two. Hence, the image of $E\:\pi_p(S^2) \to \pi_{p+1}(S^3)$ is killed
by 2.  By Proposition ~\ref{spherical-fibration}, this gives the
result, since $N_k$ has order $8$. (Alternatively, we could have
used~\cite{Selick}, since $S^3$ has exponent 4.)

In the case of the Poincar\'e complexes defined using $S_k$, we need
to consider the image $E\: \pi_p(S^4) \to \pi_{p+1} (S^5)$.  By
Selick's theorem, $S^5$ has exponent $8$, and therefore the image of
$E$ is killed by $8$.  Since $S_k$ has order 16, and the conclusion
follows once again by application of Proposition
~\ref{spherical-fibration}.

\section{Periodicity in high dimensional knot theory \label{knot-period}}

We show how Theorem ~\ref{period-thm} gives a homotopy-theoretic
periodicity operator from $n$-knots to $(n+4)$-knots, inducing the
four-fold periodicity in the knot cobordism groups
~\cite{Levine:cobordism}.  Knot periodicity has been geometrically
described ~\cite{Bredon:periodicity, C-S:cobordism, Kauffman}.

Fix $n \ge 1$. By a {\it (smooth) Seifert surface} we mean 
an codimension one compact smooth submanifold $V^{n+1}\subset S^{n+2}$
in which $\partial V := \Sigma^n$ is a homotopy $n$-sphere. 

Two Seifert surfaces $V_i \subset S^{n+2}$
with $i = 1,2$ are said to be {\it equivalent} if
there is a diffeomorphism of $S^{n+2}$ which transfers $V_1$ to $V_2$.

\begin{rem} 
If $\Sigma^n \subset S^{n+2}$ is a codimension two knot, then
it has a Seifert surface. If the fundamental group of the complement
of the knot is infinite cyclic, then there exists a Seifert surface for
it which is simply connected ~\cite{Levine_unknot}.  Conversely, if
there is a $1$-connected Seifert surface, then the complement has
infinite cyclic fundamental group. One says in this instance that the
knot is {\it $1$-simple}.
\end{rem}

\subsection*{Homotopy Seifert Surfaces} Fix $n \ge 2$. A {\it homotopy Seifert
surface} of dimension $n+1$ is a diagram of spaces
$
\xymatrix{
S^n \ar[r]^{\alpha} & K \ar@{=>}[r]_{p_-}^{p_+} & C
}
$
in which
\begin{itemize}
\item $\alpha$ is an inclusion making $(K,S^n)$ into a Poincar\'e pair.
\item $K$ and $C$ are $1$-connected and have the homotopy type of finite CW complexes;
\item $p_- \circ \alpha = p_-\circ \alpha$;
\item The homomorphism $(p_+)_* - (p_+)_*\: H_*(K) \to H_*(C)$ is an
isomorphism  in positive degrees, where $H_*$ denotes singular homology.
\end{itemize} 
(Compare ~\cite{Richter,Farber}.) Denote these data by
$(\alpha,p_\pm)$.
An equivalence $(\alpha,p_\pm) @>\sim >> (\alpha',p'_\pm)$ (with 
$p'_\pm \: K' \to C'$) consists homotopy equivalences $a\: K \to K'$
and $b\: C \to C'$ such that $q_\pm \circ a = b\circ p_\pm$ and
$\alpha' = a \circ \alpha$. Two homotopy Seifert surfaces will be
called {\it equivalent} if there is a finite chain of equivalences
connecting them.

\begin{lem} \label{null-lemma} If $(\alpha,p_\pm)$ is a homotopy Seifert Surface, then 
 $\Sigma\alpha$ is nullhomotopic. Furthermore, the homotopy class of the
 nullhomotopy is preferred.
\end{lem}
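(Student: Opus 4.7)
The plan is to convert the identity $p_+ \circ \alpha = p_- \circ \alpha$ into a nullhomotopy of $\Sigma\alpha$ by exploiting the cogroup structure on $\Sigma K$. The central claim I would prove is that the formal difference
\[
\Sigma p_+ - \Sigma p_- \colon \Sigma K \to \Sigma C,
\]
defined using the pinch comultiplication on $\Sigma K$, is a homotopy equivalence. Granted this, the identity $p_+\circ\alpha = p_-\circ\alpha$ suspends to $\Sigma p_+ \circ \Sigma\alpha = \Sigma p_-\circ \Sigma\alpha$, and feeding $\Sigma\alpha$ through the pinch map on $\Sigma S^n$ presents $(\Sigma p_+ - \Sigma p_-)\circ\Sigma\alpha$ as a map of the form $f - f$. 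This carries the canonical nullhomotopy provided by the cogroup inverse axiom; composing with a homotopy inverse of $\Sigma p_+ - \Sigma p_-$ then yields a nullhomotopy of $\Sigma\alpha$ itself.

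To verify that $\Sigma p_+ - \Sigma p_-$ is an equivalence, I would compute its effect on reduced homology. By naturality of the suspension isomorphism, it realises the suspension of $(p_+)_* - (p_-)_*\colon \tilde H_*(K) \to \tilde H_*(C)$, which is an isomorphism in positive degrees by hypothesis and trivially in degree zero because $K$ and $C$ are connected. Since $K$ and $C$ are $1$-connected finite CW complexes, $\Sigma K$ and $\Sigma C$ are simply connected CW complexes, and the homological Whitehead theorem promotes the homology isomorphism to a homotopy equivalence.

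For the ``preferred'' assertion, I would note that a homotopy inverse to $\Sigma p_+ - \Sigma p_-$ is unique up to homotopy, so the nullhomotopy of $\Sigma\alpha$ produced by the above recipe is canonical up to homotopy once the data of the homotopy Seifert surface is fixed. The only bookkeeping point is to check that the canonical nullhomotopy of $f - f$ transports through the chosen inverse to a well-defined homotopy class of nullhomotopies of $\Sigma\alpha$; this is routine because homotopies between homotopy inverses give rise to homotopies between the transported nullhomotopies.

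I do not anticipate a serious obstacle. The argument is essentially a formal consequence of the co-$H$ structure on a suspension together with the homological Whitehead theorem, and makes no use of the Poincaré pair hypothesis on $(K,S^n)$, which presumably enters only in the subsequent development of the Seifert-surface formalism.
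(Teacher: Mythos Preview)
Your proposal is correct and follows essentially the same route as the paper: both arguments show that $\Sigma p_+ - \Sigma p_-$ is a homotopy equivalence via the Whitehead theorem, observe that its composite with $\Sigma\alpha$ is canonically null (being of the form $f-f$ since $p_+\circ\alpha = p_-\circ\alpha$), and then pull back the nullhomotopy along a homotopy inverse. Your write-up is simply more explicit about the naturality of the pinch map and the uniqueness of the homotopy inverse than the paper's terse version.
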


\begin{proof} The map $\S p_+ - \S p_-\: \S K \to \S C$ is
a homology isomorphism and therefore a homotopy equivalence by the
Whitehead theorem.  Call this map $h$. Then $h \circ \S \alpha$
has a preferred nullhomotopy.  The nullhomotopy for $\S \alpha$ is
now obtained by choosing a homotopy inverse for $h$.
\end{proof}

\subsection*{The relation between smooth Seifert  and homotopy Surfaces}
Let $V^{n+1} \subset S^{n+2}$ be a simply connected Seifert surface with $\partial V = \Sigma$.
We will show how to construct an associated homotopy Seifert surface.

Fix an orientation preserving   $\S \cong S^n$ homeomorphism (here we
are using the Poincar\'e conjecture).
Choose a compact tubular neighborhood $U$ of $V$ and define $C$ to be the complement of
the interior of $U$. Then $\partial U \subset C$.  Identify $U$ with $V \times I$. Then
$\partial U$ is identified with $V\times 0 \cup \Sigma^n \times I \cup V\times 1$.

Let $K_- := V \times 0 \cup \Sigma^n \times [0,1/2]$ and $K_+ = V \times 1 \cup \Sigma^n \times [1/2,1]$. Then $K_-$ and $K_+$ are homeomorphic by a preferred homeomorphism $h\: K_- \to K_+$. 
Set $K := K_-$ and let $\alpha\: S^n \to K$ be the identification $S^n \simeq \S \times 1/2$
followed by the inclusion $\S \times 1/2 \subset K_-$. Define $p_-\: K \to C$ to be
the inclusion, and $p_+\:K \to C$ to be $h\: K = K_- \to K_+$ followed by the 
inclusion $K_+ \subset C$.  By construction $p_\pm$ coequalize $\alpha$ and 
$(K,S^n)$ is a Poincar\'e pair. The homomorphism $(p_+)_* - (p_-)_*$ is seen to be an isomorphism in
positive degrees using the pushout diagram
$$
\begin{CD}
\partial (V\times I) @>>> C
\\
@VVV @VVV 
\\
V \times I  @>>> S^{n+2}
\end{CD}
$$
as follows: let $D_0$ be the result of removing the top cell of $\partial (V\times I)$.
Then $D_0$ is identified with $K \vee K$ up to homotopy.  With respect
to this identification we have a homotopy pushout
$$
\begin{CD}
K \vee K @>>> C
\\
@VVV @VVV 
\\
K  @>>> \Bbb R^{n+2}
\end{CD}
$$
where $K \vee K \to K$ is the fold map  and 
$K \vee K \to C$ is the map $(p_-,p_+)$. The conclusion now follows from the Mayer-Vietoris
sequence for the pushout.

\begin{thm} \label{smooth} Assume $n \ge 5$. Then the above induces a
bijection between the set of equivalence classes of $1$-connected smooth Seifert surfaces
in $S^{n+2}$ and the set of equivalence classes of homotopy Seifert surfaces of dimension
$n+1$.
\end{thm}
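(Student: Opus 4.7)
The plan is to construct an inverse to the forward map from smooth to homotopy Seifert surfaces, and then verify the map and its inverse are well-defined on equivalence classes. Well-definedness of the forward map is essentially formal: an ambient diffeomorphism of $S^{n+2}$ sending $V_1$ to $V_2$ carries tubular neighborhoods to tubular neighborhoods and complements to complements, producing the data of an equivalence $(\alpha_1,p_\pm^1)\to (\alpha_2,p_\pm^2)$.

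For the construction of the inverse, given a homotopy Seifert surface $(\alpha,p_\pm)$ with $\alpha\colon S^n\to K$ and $p_\pm\colon K\to C$, first assemble the double mapping pushout
$$
D \;:=\; K\cup_{(p_-,p_+)} C,
$$
where $(p_-,p_+)\colon K\vee K\to C$ and the other map $K\vee K\to K$ is the fold. Applying Mayer--Vietoris to this pushout and using the hypothesis that $(p_+)_*-(p_-)_*$ is an isomorphism on positive-degree homology shows $H_*(D)\cong H_*(S^{n+2})$; since $D$ is simply connected by van Kampen, it is a Poincar\'e complex homotopy equivalent to $S^{n+2}$. By construction this pushout gives a codimension-one Poincar\'e embedding of $K$ in the Poincar\'e sphere $D$, with complement $C$.

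Next I would realize $(K,S^n)$ as a smooth manifold. Since $(K,S^n)$ is a $1$-connected Poincar\'e pair of dimension $n+1\ge 6$ and the Spivak normal fibration is recovered from the Poincar\'e embedding in $D$ via the standard normal bundle of the standard sphere, the smoothing obstruction lies in a surgery group; Wall's $\pi$--$\pi$ theorem (applicable because $K$ and its boundary are both simply connected) produces a smooth compact manifold $V^{n+1}$ with $\partial V=\Sigma^n$ a homotopy $n$-sphere, together with a homotopy equivalence of pairs $(V,\Sigma^n)\simeq (K,S^n)$ respecting the Spivak data. Now smooth the Poincar\'e embedding $V\hookrightarrow D$: use Browder's codimension-one embedding theorem (available in the $1$-connected range $n\ge 5$) to produce a smooth embedding, then invoke the generalized Poincar\'e conjecture (valid since $\dim D=n+2\ge 7$) to identify $D$ diffeomorphically with $S^{n+2}$. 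The resulting smooth Seifert surface reconstructs the original homotopy Seifert surface up to equivalence by the pushout description in the excerpt.

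For injectivity, suppose two smooth Seifert surfaces $V_1,V_2\subset S^{n+2}$ produce equivalent homotopy Seifert surfaces. The equivalence yields a compatible homotopy equivalence of the complements and of the Seifert surfaces respecting the gluing data $p_\pm$; by uniqueness of smoothings of $1$-connected codimension-one Poincar\'e embeddings in this dimension range (an application of the relative $h$-cobordism theorem, which needs $n+1\ge 5$) this homotopy equivalence is covered by an ambient diffeomorphism of $S^{n+2}$ carrying $V_1$ to $V_2$. The main obstacle will be the smoothing step: one must simultaneously solve surgery obstructions for $V$ and for the Poincar\'e embedding $V\hookrightarrow D$, and verify that the resulting ambient manifold can be identified with the \emph{standard} $S^{n+2}$ in a way compatible with the prescribed homotopy Seifert data; this forces the systematic use of the surgery exact sequence together with $h$-cobordism, all of which require precisely the dimension hypothesis $n\ge 5$.
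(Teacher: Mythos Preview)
Your overall strategy---assemble an ambient Poincar\'e sphere from the homotopy Seifert data and then invoke surgery---is the paper's strategy too. But there are two concrete problems.

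First, your pushout is the wrong space. The homotopy pushout of $K \xleftarrow{\text{fold}} K\vee K \xrightarrow{(p_-,p_+)} C$ is contractible, not a homotopy $S^{n+2}$: the Mayer--Vietoris map $\tilde H_*(K)\oplus\tilde H_*(K)\to \tilde H_*(K)\oplus\tilde H_*(C)$ has matrix $\bigl(\begin{smallmatrix}1&1\\(p_-)_*&(p_+)_*\end{smallmatrix}\bigr)$, which is an isomorphism in positive degrees precisely because $(p_+)_*-(p_-)_*$ is, so the reduced homology of your $D$ vanishes. (The paper records exactly this square just before the theorem and identifies its pushout with $\Bbb R^{n+2}$.) What is missing is the top cell of $\partial(K\times I)$: one must glue along the full double $D(K)=K\times 0\cup S^n\times I\cup K\times 1$ and form $N=(K\times I)\cup_{D(K)} C$. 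That is what supplies the $(n{+}2)$-dimensional fundamental class and the Poincar\'e triad $(N;K\times I,C;D(K))$ on which the surgery argument runs.

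Second, your order of operations---smooth $(K,S^n)$ to a manifold $V$, then smooth the Poincar\'e embedding, then ``invoke the generalized Poincar\'e conjecture to identify $D$ diffeomorphically with $S^{n+2}$''---does not close. The Poincar\'e conjecture in this range yields only a homeomorphism; the ambient smooth manifold you produce may well be an exotic sphere, and you yourself flag this as ``the main obstacle'' without resolving it. The paper avoids the issue by running the argument in the opposite direction. It works with the diagram of smooth structure sets
\[
\cal S(N;K\times I,C;D(K)) \xrightarrow{\ \Phi_1\ } \cal S(N)\cong \cal S(S^{n+2}),
\qquad
\cal S(K,S^n)\xrightarrow{\ \times I\ }\cal S(K\times I,D(K)),
\]
notes that $\Phi_1$ is a bijection by Wall's codimension one splitting theorem and that $\times I$ is a bijection by the $\pi$--$\pi$ theorem, and then \emph{starts} from the identity structure on $S^{n+2}$, lifting it through $\Phi_1^{-1}$ to a smooth triad whose ambient manifold is literally $S^{n+2}$. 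The $\times I$ bijection then rewrites the thickened piece as $V\times I$, producing the Seifert surface $V\times\tfrac12\subset S^{n+2}$. Uniqueness is likewise obtained from the injectivity of these two maps, together with a collar/pseudoisotopy argument to upgrade the resulting diffeomorphism $V_0\times I\cong V_1\times I$ to one compatible with the triad decomposition---rather than from an unspecified ``uniqueness of smoothings of codimension-one Poincar\'e embeddings'' as you invoke.
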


\begin{proof} (Existence). We need to show that the every homotopy Seifert 
surface arises up to equivalence
from a smooth one. Let $(\alpha,p_\pm)$ be a homotopy Seifert surface, with $\alpha\: S^n \to K$
and $p_\pm \: K \to C$. Let $D(K)$ denote the double mapping cylinder 
$K\times 0 \cup S^n \times I \cup  K\times 1$, and  let
$p\: D(K)\to C$ be the map defined by $p_-$ on $K\times 0$, $p_+$ on $K \times 1$ and
the constant homotopy of the map $p_-\circ \alpha$ on $S^n \times I$. Without loss
in generality, we can assume $p\:D(K) \to C$ is a cofibration. Let 
$$
N = (K \times I) \cup_{D(K)} C\, .
$$
Then $N$ has an orientation preserving homotopy equivalence to $S^{n+2}$. Furthermore,
we have a Poincar\'e triad $(N;K\times I,C;D(K))$.

The scheme will be to use the diagram of smooth structure sets
$$
\multiply\dgARROWLENGTH by 2
\divide\dgARROWLENGTH by 3
\begin{diagram}
\node[2]{\cal S(N;K\times I,C;D(K))}   \arrow{e,tb}{\Phi_1}{\cong}   
\arrow{s,r}{\Phi_2}
\node{ \cal S(N) \cong \cal S(S^{n+2})}   
\\
\node{\cal S(K,S^n)}   \arrow{e,tb}{\cong}{\times I}   
\node{\cal S(K\times I,D(K)).}   
\end{diagram}
$$
The $h$ or $s$ decorations on the structure sets are unnecessary since we are in the 
simply connected case. Here, for an $n$-dimensional Poincar\'e pair $(X,\partial X)$ of $1$-connected complexes, 
$\cal S(X,\partial X)$ denotes the set generated by homotopy equivalences of pairs 
$(M,\partial M) \to (X,\partial X)$ subject to the relation of $h$-cobordism. Similarly,
$\cal S(N;K\times I,C;D(K))$ is the smooth structure set on the Poincar\'e triad
$(N;K\times I,C;D(K))$. The function labeled $\Phi_i$ are forgetful
maps, and $\Phi_1$ is an isomorphism by codimension one splitting
~\cite[Thm.~12.1]{Wall}. The function labeled ``$\times I$'' is given
by taking cartesian product with the unit interval.  It too is an
isomorphism by the $\pi$-$\pi$ theorem ~\cite[Thm.~3.3]{Wall}.

We proceed as follows. Choose the identity structure on $S^{n+2}$ and use
the top isomorphism of the displayed diagram to give a smooth triad structure 
$$
(S^{n+2};U,C';\partial U) @>{\sim}>> (N;K\times I,C;D(K))
$$
Then use the bottom isomorphism of the diagram to write $(U,\partial U)$ as $(V\times I,\partial (V\times I))$
up to diffeomorphism where $\S := \partial V$ is a homotopy $n$-sphere.  Then we have
a smooth triad $(S^{n+2};V\times I,C',\partial (V\times I))$ yielding a smooth $1$-connected
Seifert surface $V\times 1/2\subset S^{n+2}$.  It is clear that the homotopy Seifert surface associated with
the smooth one is equivalent to the one we started with.

\noindent (Uniqueness).  The proof will also appeal to the diagram appearing the proof
of existence. Let $(\alpha, p_\pm)$ be a homotopy Seifert surface as above
and suppose that $V_i \subset S^{n+2}$ are $1$-connected Seifert surfaces, $i = 0,1$,
whose associated homotopy Seifert surfaces admit equivalences to $(\alpha, p_\pm)$.
The equivalences yield a pair of two smooth triad structures
$$
(S^{n+2};V_i\times I,C'_i,\partial (V_i\times I)) @> \sim >> 
(N;K\times I,C;D(K)) \, ,
$$
and by using the injectivity of $\Phi_1$, we infer
that the two smooth triad structures are equivalent. We infer 
(by straightening $h$-cobordisms) that there is a diffeomorphism
$$
\psi\: (S^{n+2};V_0\times I,C'_0,\partial (V_0\times I)) \cong  (S^{n+2};V_1\times I,C'_1,\partial (V_1\times I))\, .
$$
Using the injectivity of the function $\times I$, it follows
that the restricted diffeomorphism $\psi\: V_0 \times I \to V_1 \times I$ is 
pseudoisotopic to one of the form $\phi\times \text{id}$, where $\phi\: V_0 \to V_1$ is a diffeomorphism. Choose such a pseudoisotopy and  let 
$$
H\: \partial (V_0 \times I) \times [0,1] @> \cong >> \partial (V_1 \times I) \times [0,1]
$$ 
be its restriction to the boundary. 
Choose  collar neighborhoods $T_i \cong \partial (V_i \times I) \times [0,1]$
of   $\partial (V_i \times I) \subset V_i \times I$. Then $H$ defines a 
diffeomorphism $T_0 \cong T_1$ which extends to a diffeomorphism
$H'\:V_0 \times I \to V_1 \times I$ by taking $\phi \times \text{id}$ on the complement
of $T_0$. Extend $H'$ to a diffeomorphism of $S^{n+2}$ using
$\psi\: C'_0 \to C'_1$. The constructed diffeomorphism of $S^{n+2}$ takes $V_0 \times 1$
to $V_1 \times 1$, so we get an equivalence of between the smooth Seifert surfaces.
\end{proof}

\subsection*{Knot periodicity}
We define an operator which associates to a
 homotopy Seifert surface in dimension $n+1$
another one of dimension $n+5$. 

Let $(\alpha,p_\pm)$ be a homotopy Seifert Surface of dimension $n+1$, where
$\alpha\: S^n \to K$ and $p_\pm\: K \to C$.  Theorem ~\ref{period-thm}
produces an attaching map $\beta \: S^{n+4}\to \Sigma^2 K$ whose
cofiber satisfies Poincar\'e duality. The proof shows that
$\Sigma^2p_-\circ \beta$ and $\Sigma^2p_+\circ \beta$ are homotopic 
via a preferred homotopy $f\:S^n \times I \to C$ (the verification of this is straightforward but tedious; we therefore omit it). The maps $\beta\:S^{n+4}\to \Sigma^2K$ and
$\Sigma^2p_\pm\: \Sigma^2 K \to \Sigma^2 C$ are close to defining a homotopy Seifert surface.
However, there are two defects: (1) $p_\pm$ is only known to coequalize $\beta$ up to homotopy, and 
(2) $\beta$ is not a cofibration. We will show how to fix these problems.

Factor the map $\beta$ by a cofibration $\beta'\: S^n \to Z$ followed by a homotopy equivalence
$h\:Z \to \Sigma^2K$. Let $p'_\pm\: Z \to C$ be $\Sigma^2 p_\pm \circ h$. 
Then $f$ defines a homotopy from $p'_-\circ \beta'$ to $p'_+\circ \beta'$.  By the homotopy
extension property, we obtain a map $q_-\: Z \to C$ such that $q_-\circ \beta' = 
(\Sigma^2 p_+) \circ \beta'$. Set $q_+ = \Sigma^2 p_+$.  Then $(\beta',q_\pm)$ is a homotopy
Seifert surface.

We now sketch a proof that the assignment $(\alpha,p_\pm) \mapsto (\beta',q_\pm)$
yields four-fold periodicity in knot cobordism. Although the verification is
somewhat tedious, the basic idea is that the intersection pairing 
of $X = K \cup_\alpha D^{n+1}$ together with the homomorphism that $p_+$ induces
on homology completely determines the smooth knot cobordism class of $(\alpha,p_\pm)$
(here we are implicitly using ~\ref{smooth} to identify $(\alpha,p_\pm)$ with a smooth
Seifert surface to make sense of the smooth knot cobordism class 
of the homotopy Seifert surface).
Then the result is established
once we show that the intersection pairing of $Y := (\Sigma^2 K)\cup_\beta D^{n+5}$
has the same intersection pairing as $X$ up to regrading (since $q_\pm$ and $p_\pm$
induce the same homomorphisms on homology).
That is idea. Some details follow.

Note that the basepoint for $S^n$
gives basepoints for $K$ and $C$. 
The maps $D(K) \to K \times I \to K$
and $D(K) \to C$ combine to a give a map $D(K) \to K \times C$, which we follow up with
the quotient map $K \times C \to K\smsh C$ to obtain a map $D(K) \to K \smsh C$.
The commutative diagram
$$
\begin{CD}
C @<<<  D(K) @>>> K
\\
@VVV @VVV  @VVV 
\\
\ast @<<<  K\smsh C  @>>> \ast
\end{CD}
$$
induces a map of homotopy pushouts $d\: S^{n+2} \to \S K\smsh C$
which is an $S$-duality map, which in turn yields the Alexander
duality isomorphism $H_*(K) \cong H^{n+1-*}(C)$ in positive
degrees. Then the homology class $d_*([S^{n+1}]) \in H_{n+1}(K\smsh
C)$ determines a class $d^\sharp \in H^{n+1}(K\smsh C)$ which are
Alexander dual via the duality map $d\smsh d$. Let
$$
\delta\: H_j(K) \otimes H_{n+1-j}(C) \to  \Bbb Z
$$
be given by $\delta(a\otimes  b) = d^\sharp (a\times b)$. Then $\delta$ is 
the {\it Alexander pairing}.

\begin{defn} The {\it Seifert pairing} 
$$
\Phi\: H_j(K) \otimes H_{n+1-j}(K) \to  \Bbb Z
$$
of $(\alpha,p_\pm)$ is given by 
$\Phi(x\otimes y) = \delta((p_+)_*(x)\otimes y))$.
\end{defn}

To establish periodicity, it will be enough by Levine
~\cite{Levine:cobordism} to show that that the Seifert pairings for
$(\alpha,p_\pm)$ and $(\beta',q_\pm)$ coincide.  To keep the
discussion simple, we will only verify this when $K$ is a suspension
(this is sufficient because Levine showed that every smooth $n$-knot
is cobordant to one having a Seifert surface which is
$\lfloor(n+1)/2\rfloor$-connected ~\cite{Levine_unknot}, and such
Seifert surfaces desuspend by the Freudenthal theorem).  We may
therefore assume that $K$ is $\lfloor(n+1)/2\rfloor$-connected, $K$ is
a suspension $\S L$, and $C$ is also identified with $\S L$
using $p_+ - p_-\: \S L \to C$.  Then map $\alpha\: S^n \to \Sigma
L$ factors as
$$
S^n @> \hat\alpha >>  \S L \smsh L @> P >> \S L
$$
where $P$ is the Whitehead product.  Furthermore, the composite
$$
 S^{n+1} @> \S \hat\alpha >>  \S L \smsh \S L @> 1 +\tau_{\S L} >> \S L \smsh \S L
$$
is an $S$-duality map.
Likewise, the proof of Theorem ~\ref{period-thm}
shows that the attaching map $\beta\: S^{n+4} \to \Sigma^2 L$ is given by 
$$
S^{n+4} @> \Sigma^4 \hat\alpha >> \S (\Sigma^2 L) \smsh (\Sigma^2 L) 
@> P >> \S (\Sigma^2 L).
$$
It is clear from this description that the intersection pairings for 
$X^{n+1} = K \cup_\alpha D^{n+1}$ and 
$Y = \Sigma^2 K \cup_\beta D^{n+5}$ coincide after regrading, since
the cup product structure of $X$ is completely determined by the
homomorphism induced by 
$\phi\:= (1 +\tau_{\S L})\circ (\S \hat\alpha)$ on
homology. More precisely, by Boardman and Steer ~\cite{B-S}, there is
a homotopy commutative diagram
$$
\begin{CD}
X @>\Delta>> X\smsh X
\\
@VVV @AAA
\\
S^{n+1}   @>\phi>> \S L \smsh \S L,
\end{CD}
$$
where $\Delta$ is the diagonal (inducing the cup product), the left
vertical map is the pinch map onto the top cell, and the right
vertical map is the inclusion.

Notice the inclusion $K \to X$ induces an isomorphism in homology in 
degrees $\ne n+1$.  Furthermore there is a  map $\S X \to \S C$
which is a homology isomorphism in degrees $\ne n+1$. The latter map is defined as follows:
the Poincar\'e embedding gives an equivalence between
$\S X$ with the cofiber $S^{n+2} \cup \text{Cone}(C)$;
compose this equivalence with the connecting map $S^{n+2} \cup \text{Cone}(C) \to \S C$
appearing in Barratt-Puppe sequence. (In terms of the splitting, $\S X \simeq \S K \vee S^{n+2}$, the restriction of the map $\S X \to \S C$ to $\S K$ is
identified with the homotopy equivalence $\S p_+ - \S p_-$, whereas
the restriction to the $S^{n+2}$ summand is trivial.)

Thus the intersection pairing of $X$ can be rewritten in positive
degrees as
$$
H_j(X) \otimes H_{n+1-j}(X) \cong H_j(K) \otimes H_{n+1-j}(C) \to  \Bbb Z\, ,
$$
where the second homomorphism is the Alexander pairing $\delta$. Thus,
the intersection pairing of $X$ and the Alexander pairing of the
Poincar\'e embedding associated with $(\alpha,p_\pm)$ coincide in
positive degrees. A similar statement holds for $Y$.

Since the intersection pairings for $X$ and $Y$ coincide (after
regrading), the Alexander pairings arising from $(\alpha,p_\pm)$ and
$(\beta',q_\pm)$ also coincide.  Since $q_\pm$ coincides with $p_\pm$
on homology, the Seifert pairings of $(\alpha,p_\pm)$ and
$(\beta',q_\pm)$ coincide.

\section{The period of a finite complex}
\label{period-notes}

Theorem ~\ref{period-thm} is not the most general result.  If 
$X = \Bbb RP^3$, then the spine of $X$ is $\Bbb RP^2$, and the top
cell of $X$ splits off after two suspensions but not one.  On the
other hand,
$\Sigma^2 \Bbb RP^2$ is the spine of $V_2(\Bbb R^5)$, the Stiefel
manifold of $2$-frames in $\Bbb R^5$.  

If $X^n$ is a Poincar\'e complex with spine $K$ such that the top cell
of $X$ splits off after one suspension, then Theorem ~\ref{period-thm}
can be iterated to produce a sequence of Poincar\'e complexes $Y_j$ of
dimension $n+4j$ having spine $\Sigma^{2j} K$. In this way, we
obtain a periodic family of Poincar\'e complexes.  This motivates

\begin{defn} A finite complex $K$ is said to be 
{\it $j$-periodic} for some positive integer $j$ if there is an
integer $c$ and a sequence of Poincar\'e complexes $X_1,X_2, \dots$
such that the spine of $X_i$ is $\Sigma^{c+ij}K$. If $K$ is
$j$-periodic for some $j$, we say that $K$ is {\it periodic}. If $K$
is not periodic, we declare it to be {\it aperiodic}.

The {\it period} of
$K$, denoted $\text{period}(K)$, is the smallest positive integer
$r$ such that $K$ is $r$-periodic.
If there is no such $r$, then
we write $\text{period}(K) = \infty$.
\end{defn}

\begin{enumerate}
\item If $K$ is periodic, then $K$ is self Spanier-Whitehead
dual. This is a direct consequence of Prop.~\ref{B-S-criterion} below.
\item $\text{period}(S^k) = \infty$, since there are only a finite
number of Hopf invariant one elements.
\item $\text{period}(\Bbb RP^2) \le 2$,
because the spine of the Stiefel manifold $V_2(\Bbb R^{3+2i})$ (consisting of
two-frames in $\Bbb R^{3+2i}$) is $\Sigma^{2i}\Bbb RP^2$. Furthermore,
$\S \Bbb RP^2$ is the spine of $\text{SU}(3)/\text{SO}(3)$,
but Mahowald has pointed out to us 
that this is the only odd suspension of
$\Bbb RP^2$ which is the spine of a Poincar\'e complex.
So, $\text{period}(\Bbb RP^2) = 2$.
\item If $K$ is the spine of a Poincar\'e complex which embeds in 
codimension one, then Theorem ~\ref{cod-one} shows 
$\text{period}(K) = 1$.
\item If $K$ is the spine of a Poincar\'e complex whose top cell splits
off after a single suspension, then $\text{period}(K) \le 2$, by Theorem 
\ref{period-thm}.
\item Let $K$ be the spine of a $4k$-dimensional Poincar\'e complex
$X$, such that the Euler characteristic $\chi(K)$ is even.  Then
$\S K$ cannot be the spine of a Poincar\'e complex of dimension
$4k + 2$ by ~\ref{jims-ex}. Hence, $\text{period}(K) > 1$.
\item If $K = \text{spine}(X)$ and $L = \text{spine}(Y)$ for
Poincar\'e complexes $X^n$ and $Y^n$, then $K\vee L$ is periodic and
$
\text{period}(K\vee L) \, \, \le \text{lcm}(\text{period}(K),
\text{period}(L))\, .
$
 To see  this, 
set $r = \text{period}(K)$ and set $s = \text{period}(L)$.
Let $\ell$ denote their least common multiple. Define
$Z_i := X_{(i\ell)/r} \# Y_{(i\ell)/s}$, where $X_i$ has spine
$\Sigma^{ir} K$ and $Y_i$ has spine $\Sigma^{is}K$.
Then $Z_i$ has spine $\Sigma^{i\ell}(K \vee L)$.
Equality generally fails: e.g., $\text{period}(S^p) = \infty = \text{period}(S^q)$, but
$\text{period}(S^p \vee S^q) = 1$.
\end{enumerate}

Our notion of periodicity is linear, in that the gaps between the
number of suspensions of $K$ appearing in the definition is constant.
The following, due to Mahowald (private communication), is an example
of a $2$-cell complex which exhibits {\it exponential periodicity}, in
the sense that the gaps grow at an exponential rate.

\begin{mainthm}[Mahowald] Let $K = \Bbb HP^2$ be the homotopy cofiber of
the $\nu\:S^7 \to S^4$. Let $\delta(i) = 2^{i+2}$. 
Then $\Sigma^{\delta(i)} K$ is a spine of a Poincar\'e complex for $i>0$. 
Furthermore, one cannot fill in the gaps: if $\Sigma^jK$ 
is the spine of a Poincar\'e complex,
for some $j>0$, then $j = 2^{i+2}$ for some $i$.
\end{mainthm}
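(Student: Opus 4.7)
The plan is to analyze precisely when $\Sigma^j K$ can serve as the spine of a Poincar\'e complex. Since $K = \mathbb{HP}^2$ has non-basepoint cells in dimensions $4$ and $8$, Poincar\'e duality forces any Poincar\'e complex $Y$ with spine $\Sigma^j K$ to have dimension $n = 2j+12$ and top-cell attaching map $\alpha\: S^{2j+11} \to \Sigma^j K$. By Proposition~\ref{B-S-criterion}, such a $Y$ exists if and only if $\lambda(\alpha)$ is an $S$-duality map $S^{2j+12} \to \Sigma^j K \wedge \Sigma^j K$. The theorem thus splits into an existence assertion at $j = 2^{i+2}$ and a non-existence assertion at every other $j > 0$.

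For existence when $j = 2^{i+2}$ with $i > 0$, I would construct $\alpha$ from the $v_1$-periodic family generated by $\nu$. The point is that $\Sigma^j \nu$ admits a Bott-periodic secondary lift exactly at these values of $j$, via the image of the $J$-homomorphism in the $(j{+}3)$-stem. Arguing as in the proof of Theorem~\ref{period-thm}, this lift lets me write $\alpha$ as a Whitehead product $[\iota_{j+4}, \tilde\iota_{j+8}]$ corrected by a $v_1$-periodic element in $\pi_{2j+11}(S^{j+4})$ whose $e$-invariant is prescribed by the duality condition. A Cartan-formula calculation using~\eqref{Cartan-formula} and Lemma~\ref{Hopf-invariant-Whitehead-product} then checks that $\lambda(\alpha)$ hits both middle cells of $\Sigma^j K \wedge \Sigma^j K$ with odd degree, so it is an $S$-duality map.

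For non-existence I would take the contrapositive. Suppose $\alpha$ exists with $\lambda(\alpha)$ an $S$-duality map, and decompose $\alpha$ via the cofiber sequence attached to the bottom cell $S^{j+4} \hookrightarrow \Sigma^j K$. The degree-one requirement on $\lambda(\alpha)$ on the two middle cells of $\Sigma^j K \wedge \Sigma^j K$ translates, through Toda-bracket manipulations of the kind carried out in \Sec\ref{barratt-mahowald-proof}, into an equation on the real Adams $e$-invariant of the attaching data. By Adams's calculation of the image of $J$ at the prime $2$, exploited in Lemma~\ref{delta-invariant-nu-sigma} and in the identity~\eqref{delta-e-invariants}, this equation can hold only when $j \in \{2^{i+2} : i \ge 0\}$.

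The main obstacle is precisely this last step: translating the $S$-duality requirement on $\lambda(\alpha)$ into a concrete $e$-invariant obstruction demands careful tracking of Toda-bracket indeterminacies through the cell structure of $\Sigma^j K$, and the resulting obstruction must be identified with the known $v_1$-periodic generator in the appropriate stem. Since the statement is attributed to Mahowald by private communication, I would invoke his framework linking the EHP sequence to the image of $J$ directly, rather than rederive the underlying $v_1$-periodic structure of the $2$-primary stable stems.
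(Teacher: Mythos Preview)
The paper does not contain a proof of this theorem. It is stated in \Sec\ref{period-notes} with the attribution ``Mahowald (private communication)'' and is offered only as an illustrative example of exponential periodicity; no argument is given. So there is no proof in the paper to compare your proposal against.

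As to your sketch itself: the reduction via Proposition~\ref{B-S-criterion} and the dimension count $n=2j+12$ are correct, and the instinct that the answer is governed by the image of $J$ and $v_1$-periodicity is the right one. But what you have written is a strategy, not a proof. The phrases ``admits a Bott-periodic secondary lift exactly at these values of $j$'' and ``corrected by a $v_1$-periodic element \dots\ whose $e$-invariant is prescribed by the duality condition'' are assertions that themselves require proof, and you acknowledge as much when you say the non-existence direction would be settled by ``invok[ing] his framework \dots\ directly.'' In particular, the step where the $S$-duality requirement on $\lambda(\alpha)$ is converted into a sharp $e$-invariant identity forcing $j=2^{i+2}$ is the entire content of the theorem, and nothing in your outline pins down why the obstruction vanishes precisely on powers of two rather than on some other arithmetic progression or sparser set. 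Until that step is made explicit, the proposal remains a plausible plan rather than a proof.
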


We ask a final question: When is a finite complex periodic?


\end{document}